\newtheorem{theorem}{Theorem}
\theoremstyle{plain}
\newtheorem{corollary}{Corollary}
\newtheorem{lemma}{Lemma}
\newtheorem{proposition}{Proposition}
\newtheorem{remark}{Remark}
\numberwithin{equation}{section}
\begin{document}
\title[Hermite-Hadamard type]{ new inequalities of Hermite-Hadamard type for
functions whose second derivatives absolute values are convex and
quasi-convex}
\author{Mehmet Zeki Sar\i kaya$^{\star }$}
\address{Department of Mathematics,Faculty of Science and Arts, D\"{u}zce
University, D\"{u}zce, Turkey}
\email{ sarikayamz@gmail.com, sarikaya@aku.edu.tr}
\thanks{$^{\star }$corresponding author}
\author{Aziz Saglam}
\address{Department of Mathematics,Faculty of Science and Arts, Afyon
Kocatepe University, Afyon, Turkey}
\email{azizsaglam@aku.edu.tr}
\author{Huseyin Y\i ld\i r\i m}
\address{Department of Mathematics,Faculty of Science and Arts, Afyon
Kocatepe University, Afyon, Turkey}
\email{hyildir@aku.edu.tr}
\date{}
\subjclass[2000]{ 26D15, }
\keywords{Hermite-Hadamard inequality, convex function, quasi-convex
function.}

\begin{abstract}
In this paper, we establish several new inequalities for twice
differantiable mappings that are connected with the celebrated
Hermite-Hadamard integral inequality. Some applications for special means of
real numbers are also provided.
\end{abstract}

\maketitle

\section{Introduction}

The following inequality is well known in the literature as the
Hermite-Hadamard integral inequality (see, \cite{PPT}):

\begin{equation}
f\left( \frac{a+b}{2}\right) \leq \frac{1}{b-a}\int_{a}^{b}f(x)dx\leq \frac{%
f(a)+f(b)}{2}  \label{H}
\end{equation}%
where $f:I\subset \mathbb{R}\rightarrow \mathbb{R}$ is a convex function on
the interval $I$ of real numbers and $a,b\in I$ with $a<b$. A function $%
f:[a,b]\subset \mathbb{R}\rightarrow \mathbb{R}$ is said to be convex if \
whenever $x,y\in \lbrack a,b]$ and $t\in \left[ 0,1\right] $, the following
inequality holds%
\begin{equation*}
f(tx+(1-t)y)\leq tf(x)+(1-t)f(y).
\end{equation*}%
This definition has its origins in Jensen's results from \cite{jensen} and
has opened up the most extended, useful and multi-disciplinary domain of
mathematics, namely, canvex analysis. Convex curvers and convex bodies have
appeared in mathematical literature since antiquity and there are many
important resuls related to them. We say that $f$ is concave if $(-f)$ is
convex.

We recall that the notion of quasi-convex functions generalizes the notion
of convex functions. More precisely, a function $f:[a,b]\subset \mathbb{R}%
\rightarrow \mathbb{R}$ is said quasi-convex on $[a,b]$ if 
\begin{equation*}
f(tx+(1-t)y)\leq \sup \left\{ f(x),f(y)\right\}
\end{equation*}%
for all $x,y\in \lbrack a,b]$ and $t\in \left[ 0,1\right] .$ Clearly, any
convex function is a quasi-convex function. Furthermore, there exist
quasi-convex functions which are not convex (see \cite{Ion}).

For several recent results concerning Hermite-Hadamard integral inequality,
we refer the reader to (\cite{Alomari}-\cite{USKMEO}).

In \cite{USK} some inequalities of Hermite-Hadamard type for differentiable
convex mappings were proved using the following lemma.

\begin{lemma}
\label{l1} Let $f:I^{\circ }\subset \mathbb{R}\rightarrow \mathbb{R}$, be a
differentiable mapping on $I^{\circ }$, $a,b\in I^{\circ }$ ($I^{\circ }$ is
the interior of $I$) with $a<b$. If \ $f^{\prime }\in L\left( \left[ a,b%
\right] \right) $, then we have%
\begin{equation}
\begin{array}{l}
\dfrac{1}{b-a}\dint_{a}^{b}f(x)dx-f\left( \dfrac{a+b}{2}\right) \\ 
\\ 
\ \ \ \ \ =\left( b-a\right) \left[ \dint_{0}^{\frac{1}{2}}tf^{\prime
}(ta+(1-t)b)dt+\dint_{\frac{1}{2}}^{1}\left( t-1\right) f^{\prime
}(ta+(1-t)b)dt\right] .%
\end{array}
\label{HH}
\end{equation}
\end{lemma}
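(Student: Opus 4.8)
The plan is to establish the identity \eqref{HH} by integrating by parts each of the two integrals on its right-hand side and then performing a single linear change of variables. Write
\[
I_1=\int_0^{1/2}t\,f'(ta+(1-t)b)\,dt,\qquad I_2=\int_{1/2}^{1}(t-1)\,f'(ta+(1-t)b)\,dt,
\]
so that the right-hand side of \eqref{HH} equals $(b-a)(I_1+I_2)$, and the goal is to show $(b-a)(I_1+I_2)=\frac{1}{b-a}\int_a^b f(x)\,dx-f\left(\frac{a+b}{2}\right)$.

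For $I_1$ I would integrate by parts taking $u=t$ and $dv=f'(ta+(1-t)b)\,dt$; since $\frac{d}{dt}f(ta+(1-t)b)=(a-b)f'(ta+(1-t)b)$, an antiderivative is $v=-\frac{1}{b-a}f(ta+(1-t)b)$. The boundary term at $t=0$ vanishes because of the factor $t$, so one gets
\[
I_1=-\frac{1}{2(b-a)}f\left(\frac{a+b}{2}\right)+\frac{1}{b-a}\int_0^{1/2}f(ta+(1-t)b)\,dt.
\]
Treating $I_2$ the same way with $u=t-1$, the boundary term at $t=1$ now vanishes because of the factor $t-1$, while at $t=1/2$ one again picks up $-\frac{1}{2(b-a)}f\left(\frac{a+b}{2}\right)$, giving
\[
I_2=-\frac{1}{2(b-a)}f\left(\frac{a+b}{2}\right)+\frac{1}{b-a}\int_{1/2}^{1}f(ta+(1-t)b)\,dt.
\]

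Adding these, the two half-integrals combine into $\int_0^1 f(ta+(1-t)b)\,dt$ and the boundary contributions add up to $-\frac{1}{b-a}f\left(\frac{a+b}{2}\right)$. Finally I would substitute $x=ta+(1-t)b$, $dx=(a-b)\,dt$, which sends $t=0\mapsto x=b$ and $t=1\mapsto x=a$ and turns $\int_0^1 f(ta+(1-t)b)\,dt$ into $\frac{1}{b-a}\int_a^b f(x)\,dx$. Multiplying through by $(b-a)$ then produces exactly the left-hand side of \eqref{HH}.

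There is no genuine obstacle here beyond careful bookkeeping: one must track the signs coming from $b-a$ versus $a-b$ and from evaluating the boundary terms at the common endpoint $t=1/2$, and one should note that the integration by parts is legitimate under the stated hypothesis since $f$ differentiable with $f'\in L([a,b])$ makes $t\mapsto f(ta+(1-t)b)$ absolutely continuous on $[0,1]$. (Alternatively one could verify the identity in reverse, but the direct integration-by-parts route is cleanest.)
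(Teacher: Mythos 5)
Your proof is correct: the integration by parts with $v=-\frac{1}{b-a}f(ta+(1-t)b)$, the evaluation of the boundary terms at $t=0$, $t=\frac12$, $t=1$, and the final substitution $x=ta+(1-t)b$ all check out and yield exactly \eqref{HH}. Note that the paper itself states Lemma \ref{l1} without proof (it is quoted from \cite{USK}); your argument is the standard one from that source and is essentially the same technique the paper uses to prove its own Lemma \ref{lm} for second derivatives, so there is nothing to add.
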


One more general result related to (\ref{HH}) was established in \cite%
{USKMEO}. The main result in \cite{USK} is as follows:

\begin{theorem}
\label{l2} Let $f:I\subset \mathbb{R}\rightarrow \mathbb{R}$, be a
differentiable mapping on $I^{\circ }$, $a,b\in I$ with $a<b$. If the
mapping $\left\vert f^{\prime }\right\vert $ is convex on $\left[ a,b\right] 
$, then%
\begin{equation}
\left\vert \frac{1}{b-a}\int_{a}^{b}f(x)dx-f\left( \frac{a+b}{2}\right)
\right\vert \leq \frac{b-a}{4}\left( \frac{\left\vert f^{\prime
}(a)\right\vert +\left\vert f^{\prime }(b)\right\vert }{2}\right) .
\label{H1}
\end{equation}
\end{theorem}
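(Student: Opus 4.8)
The plan is to argue directly from the integral representation in Lemma~\ref{l1}, so no new identity is needed. Starting from~(\ref{HH}), I would take absolute values of both sides and apply the triangle inequality to the two integrals on the right, using that $\left\vert t-1\right\vert =1-t$ for $t\in\lbrack 1/2,1]$, to obtain
\begin{equation*}
\left\vert \frac{1}{b-a}\int_{a}^{b}f(x)dx-f\left( \frac{a+b}{2}\right) \right\vert \leq (b-a)\left[ \int_{0}^{1/2}t\left\vert f^{\prime }(ta+(1-t)b)\right\vert dt+\int_{1/2}^{1}(1-t)\left\vert f^{\prime }(ta+(1-t)b)\right\vert dt\right].
\end{equation*}

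Next I would invoke the hypothesis that $\left\vert f^{\prime }\right\vert $ is convex on $[a,b]$, which yields the pointwise estimate $\left\vert f^{\prime }(ta+(1-t)b)\right\vert \leq t\left\vert f^{\prime }(a)\right\vert +(1-t)\left\vert f^{\prime }(b)\right\vert $ for every $t\in\lbrack 0,1]$. Substituting this into each of the two integrals above reduces everything to the four elementary integrals $\int_{0}^{1/2}t^{2}dt$, $\int_{0}^{1/2}t(1-t)dt$, $\int_{1/2}^{1}t(1-t)dt$, and $\int_{1/2}^{1}(1-t)^{2}dt$, whose values are $\tfrac{1}{24}$, $\tfrac{1}{12}$, $\tfrac{1}{12}$, and $\tfrac{1}{24}$ respectively (the last two handled by the substitution $u=1-t$ or by direct antidifferentiation).

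Collecting the coefficients of $\left\vert f^{\prime }(a)\right\vert $ and $\left\vert f^{\prime }(b)\right\vert $ then gives the bound $(b-a)\left[ \tfrac{1}{8}\left\vert f^{\prime }(a)\right\vert +\tfrac{1}{8}\left\vert f^{\prime }(b)\right\vert \right] $, which is precisely $\tfrac{b-a}{4}\cdot \tfrac{\left\vert f^{\prime }(a)\right\vert +\left\vert f^{\prime }(b)\right\vert }{2}$, establishing~(\ref{H1}). I do not expect a genuine obstacle in this argument: it is a routine chaining of the integral identity, the triangle inequality, and the convexity inequality for $\left\vert f^{\prime }\right\vert $. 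The only points that need a little care are keeping the absolute value $\left\vert t-1\right\vert $ correct on the subinterval $[1/2,1]$ and evaluating the four weight integrals accurately so that the constants combine to the claimed $\tfrac{b-a}{4}$.
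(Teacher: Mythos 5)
Your argument is correct: the four weight integrals are $\tfrac{1}{24},\tfrac{1}{12},\tfrac{1}{12},\tfrac{1}{24}$ as you state, the coefficients of $\left\vert f^{\prime }(a)\right\vert$ and $\left\vert f^{\prime }(b)\right\vert$ each sum to $\tfrac{1}{8}$, and the chain of Lemma~\ref{l1}, the triangle inequality, and the convexity of $\left\vert f^{\prime }\right\vert$ yields exactly (\ref{H1}). The paper itself gives no proof of this theorem (it is quoted from \cite{USK}), but your route is precisely the one that reference uses via Lemma~\ref{l1}, so the proposal matches the intended argument.
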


In \cite{CEMPJP}, Pearce and Pe\v{c}ari\'{c} proved the following theorem.

\begin{theorem}
\label{s1} Let $f:I\subset \mathbb{R}\rightarrow \mathbb{R}$, be a
differentiable mapping on $I^{\circ }$, $a,b\in I^{\circ }$ with $a<b$. If
the mapping $\left\vert f^{\prime }\right\vert ^{q}$ is convex on $\left[ a,b%
\right] $ for some $q\geq 1$, then%
\begin{equation}
\left\vert \frac{1}{b-a}\int_{a}^{b}f(x)dx-f\left( \frac{a+b}{2}\right)
\right\vert \leq \frac{b-a}{4}\left( \frac{\left\vert f^{\prime
}(a)\right\vert ^{q}+\left\vert f^{\prime }(b)\right\vert ^{q}}{2}\right) ^{%
\frac{1}{q}}.  \label{H2}
\end{equation}
\end{theorem}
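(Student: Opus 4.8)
The plan is to start from the integral identity of Lemma \ref{l1}, pass to absolute values, and then apply the power-mean (H\"{o}lder) inequality before invoking the convexity hypothesis on $|f'|^{q}$. It is convenient first to merge the two integrals in (\ref{HH}) into one: setting $p(t)=t$ on $[0,\tfrac12]$ and $p(t)=t-1$ on $(\tfrac12,1]$, identity (\ref{HH}) becomes
\[
\frac{1}{b-a}\int_a^b f(x)\,dx-f\!\left(\frac{a+b}{2}\right)=(b-a)\int_0^1 p(t)\,f'\big(ta+(1-t)b\big)\,dt .
\]
Since $|p(t)|=t$ on $[0,\tfrac12]$ and $|p(t)|=1-t$ on $[\tfrac12,1]$, the triangle inequality for integrals gives
\[
\left|\frac{1}{b-a}\int_a^b f(x)\,dx-f\!\left(\frac{a+b}{2}\right)\right|\le (b-a)\int_0^1 |p(t)|\,\big|f'(ta+(1-t)b)\big|\,dt .
\]

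Next, for $q\ge 1$ I would apply the power-mean inequality to the last integral with the weight $|p(t)|$, writing $|p|\,|f'|=|p|^{1-1/q}\cdot\big(|p|\,|f'|^{q}\big)^{1/q}$ and using H\"{o}lder with exponents $q/(q-1)$ and $q$:
\[
\int_0^1 |p(t)|\,\big|f'(ta+(1-t)b)\big|\,dt\le\left(\int_0^1 |p(t)|\,dt\right)^{1-\frac1q}\left(\int_0^1 |p(t)|\,\big|f'(ta+(1-t)b)\big|^{q}\,dt\right)^{\frac1q}.
\]
A direct computation gives $\int_0^1|p(t)|\,dt=\tfrac14$. For the remaining factor I would use the convexity of $|f'|^{q}$, i.e.\ $|f'(ta+(1-t)b)|^{q}\le t|f'(a)|^{q}+(1-t)|f'(b)|^{q}$, to obtain
\[
\int_0^1 |p(t)|\,\big|f'(ta+(1-t)b)\big|^{q}\,dt\le |f'(a)|^{q}\int_0^1 t\,|p(t)|\,dt+|f'(b)|^{q}\int_0^1 (1-t)\,|p(t)|\,dt .
\]
Because $|p(\cdot)|$ is symmetric about $t=\tfrac12$, the two weights integrate to the same value, and each equals half of $\int_0^1|p(t)|\,dt$, i.e.\ $\tfrac18$; hence this factor is at most $\tfrac18\big(|f'(a)|^{q}+|f'(b)|^{q}\big)$.

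Combining the three displays yields
\[
\left|\frac{1}{b-a}\int_a^b f(x)\,dx-f\!\left(\frac{a+b}{2}\right)\right|\le (b-a)\left(\frac14\right)^{1-\frac1q}\left(\frac18\big(|f'(a)|^{q}+|f'(b)|^{q}\big)\right)^{\frac1q},
\]
and the elementary identity $(1/4)^{1-1/q}(1/8)^{1/q}=\tfrac14(1/2)^{1/q}$ rewrites the right-hand side precisely as $\dfrac{b-a}{4}\left(\dfrac{|f'(a)|^{q}+|f'(b)|^{q}}{2}\right)^{1/q}$, which is (\ref{H2}). For $q=1$ no power-mean step is needed and one recovers (\ref{H1}).

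The computations themselves are routine; the only points that need care are (i) organising the estimate through the single weight $|p(t)|$ — one may instead split into the two subintervals $[0,\tfrac12]$ and $[\tfrac12,1]$ and apply the power-mean inequality on each, but then some extra manipulation is required to put the resulting bound into the compact form (\ref{H2}) — and (ii) the bookkeeping of the exponents of $1/4$ and $1/8$ at the last step, so that the constant collapses to the stated $\tfrac{b-a}{4}$.
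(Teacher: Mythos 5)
Your proof is correct; I checked the constants ($\int_0^1|p(t)|\,dt=\tfrac14$, $\int_0^1 t|p(t)|\,dt=\int_0^1(1-t)|p(t)|\,dt=\tfrac1{24}+\tfrac1{12}=\tfrac18$, and $(1/4)^{1-1/q}(1/8)^{1/q}=\tfrac14\,2^{-1/q}$), and the weighted H\"older step with exponents $q/(q-1)$ and $q$ is applied correctly, degenerating harmlessly when $q=1$. Note, however, that the paper itself gives no proof of this statement: Theorem \ref{s1} is quoted from Pearce and Pe\v{c}ari\'{c} \cite{CEMPJP}, so there is nothing in the text to compare against directly. Relative to the standard argument in that reference, your route is essentially the same (Lemma \ref{l1} plus the power-mean inequality) but with a worthwhile simplification: by merging the two integrals into a single kernel $p(t)$ and using the symmetry of $|p|$ about $t=\tfrac12$, the two convexity weights integrate to the same value $\tfrac18$, so the final constant collapses immediately. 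The usual split-interval proof instead produces the two terms $\bigl(\tfrac{A}{24}+\tfrac{B}{12}\bigr)^{1/q}+\bigl(\tfrac{A}{12}+\tfrac{B}{24}\bigr)^{1/q}$ (with $A=|f'(a)|^q$, $B=|f'(b)|^q$) and needs the extra concavity estimate $u^{1/q}+v^{1/q}\le 2^{1-1/q}(u+v)^{1/q}$ to reach the stated bound; your version avoids that step, at no cost in generality or sharpness. One very minor point you could mention for completeness is that convexity of $|f'|^q$ on $[a,b]$ bounds $|f'|$ there, so the integrability hypothesis of Lemma \ref{l1} is automatically met.
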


In this article, using functions whose second derivatives absolute values
are convex and quasi-convex, we obtained new inequalities releted to the
left side of Hermite-Hadamard inequality. Finally, we gave some applications
for special means of real numbers.

\section{Hermite-Hadamard type inequalities for convex functions}

We will establish some new results connected with the left-hand side of (\ref%
{H}) used the following Lemma. Now, we give the following new Lemma for our
results:

\begin{lemma}
\label{lm}Let $f:I^{\circ }\subset \mathbb{R}\rightarrow \mathbb{R}$ be
twice differentiable function on $I^{\circ }$, $a,b\in I^{\circ }$ with $%
a<b. $ If $f^{\prime \prime }\in L_{1}[a,b]$, then%
\begin{equation*}
\begin{array}{l}
\dfrac{1}{b-a}\dint_{a}^{b}f(x)dx-f(\dfrac{a+b}{2}) \\ 
\\ 
\ \ \ \ \ \ \ \ \ \ =\dfrac{\left( b-a\right) ^{2}}{2}\dint_{0}^{1}m\left(
t\right) \left[ f^{\prime \prime }(ta+(1-t)b)+f^{\prime \prime }(tb+(1-t)a)%
\right] dt,%
\end{array}%
\end{equation*}%
where%
\begin{equation*}
m(t):=\left\{ 
\begin{array}{ll}
t^{2} & ,t\in \lbrack 0,\frac{1}{2}) \\ 
&  \\ 
\left( 1-t\right) ^{2} & ,t\in \lbrack \frac{1}{2},1].%
\end{array}%
\right.
\end{equation*}
\end{lemma}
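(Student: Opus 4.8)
The plan is to prove the identity by direct computation, evaluating the right-hand side via integration by parts and reducing it to the left-hand side. First I would observe that, by the substitution $t \mapsto 1-t$ in the integral $\int_0^1 m(t) f''(tb+(1-t)a)\,dt$ and using the symmetry $m(t)=m(1-t)$, the two terms inside the bracket contribute equally after integration; concretely, I expect that it suffices to show
\[
\frac{(b-a)^2}{2}\int_0^1 m(t)\,\bigl[f''(ta+(1-t)b)+f''(tb+(1-t)a)\bigr]\,dt
= (b-a)^2\int_0^1 m(t)\,f''(ta+(1-t)b)\,dt
\]
after symmetrizing, and then to evaluate the single integral $I:=\int_0^1 m(t) f''(ta+(1-t)b)\,dt$.

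Next I would split $I$ at $t=\tfrac12$ according to the definition of $m$, obtaining $I = \int_0^{1/2} t^2 f''(ta+(1-t)b)\,dt + \int_{1/2}^1 (1-t)^2 f''(ta+(1-t)b)\,dt$. On each piece I would integrate by parts twice: once moving a derivative from $f''$ to the polynomial weight (using $\frac{d}{dt} f'(ta+(1-t)b) = (a-b) f''(ta+(1-t)b)$, so each integration by parts brings a factor $\tfrac{1}{a-b}$), and a second time to pass from $f'$ to $f$. The boundary terms at $t=0$ and $t=1$ involve $m$ and $m'$, which vanish there since $m(0)=m(1)=0$ and $m'(0)=m'(1)=0$; the boundary terms at $t=\tfrac12$ from the two pieces must be checked to cancel, which works because $m$ is $C^1$ at $\tfrac12$ (both $t^2$ and $(1-t)^2$ equal $\tfrac14$ there, and their derivatives equal $\pm 1$... wait—here one must be careful, as $m'$ jumps at $\tfrac12$). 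After the first integration by parts the weight becomes $m'(t)$, whose jump at $\tfrac12$ is exactly compensated because $m'$ appears multiplied by $f'$ evaluated at the same point from both sides; after the second integration by parts the constant weight $m''$ is $2$ on each subinterval, and what remains is $-\int_0^1 2 f(ta+(1-t)b)\,dt$ together with a surviving boundary term $f'(\tfrac{a+b}{2})$-type expression that, after the change of variables $x = ta+(1-t)b$, collapses to $\tfrac{1}{b-a}\int_a^b f(x)\,dx - f(\tfrac{a+b}{2})$ up to the scalar factors.

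The main obstacle I anticipate is bookkeeping the boundary contributions at the interior point $t=\tfrac12$ correctly: one must track the left-limit and right-limit values of the polynomial weight and its derivative, confirm that the $f'(\tfrac12\,a+\tfrac12\,b)$ boundary terms coming from the two subintervals cancel, and that the $f(\tfrac{a+b}{2})$ boundary terms combine with the correct sign and coefficient to produce exactly $-f(\tfrac{a+b}{2})$ after multiplying by $\tfrac{(b-a)^2}{2}$. The cleanest way to organize this is to define $g(t) = f(ta+(1-t)b)$ so that $g'(t) = (a-b)f'(\cdot)$ and $g''(t) = (a-b)^2 f''(\cdot)$, rewrite $I$ as $\tfrac{1}{(a-b)^2}\int_0^1 m(t) g''(t)\,dt$, and then the double integration by parts is a routine one-variable calculation: $\int_0^1 m g'' = [m g']_0^1 - \int_0^1 m' g' = [m g' - m' g]_0^1 + \int_0^1 m'' g$, interpreting all boundary-type terms as sums over the subintervals $[0,\tfrac12]$ and $[\tfrac12,1]$ and using $m''\equiv 2$ piecewise. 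Verifying that the $t=\tfrac12$ contributions telescope away and that $[m g' - m' g]$ at the endpoints $0,1$ vanishes then finishes the argument, and substituting back $g(t)$ and simplifying the scalar factors yields the claimed identity.
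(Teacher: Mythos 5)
Your route is essentially the paper's: split the integral at $t=\tfrac12$, integrate by parts twice on each piece, let the boundary terms produce the midpoint value, and use the change of variable $x=ta+(1-t)b$ to produce $\frac{1}{b-a}\int_a^b f(x)\,dx$. The one genuine economy is your symmetrization step: since $m(1-t)=m(t)$, the substitution $t\mapsto 1-t$ shows the two integrals coincide, so you compute only one of them, where the paper computes $I_2$ ``similarly.'' Your organized scheme $\int_0^1 m\,g''=[mg'-m'g]+\int_0^1 m''g$ with $g(t)=f(ta+(1-t)b)$, interpreted piecewise, works; note only that at $t=\tfrac12$ the $g'$-terms cancel after the \emph{first} integration by parts because $m$ is continuous there (the jump of $m'$ plays no role at that stage), while after the second integration by parts the $g$-terms do \emph{not} cancel: they add up to $-2g(\tfrac12)=-2f(\tfrac{a+b}{2})$, and the remaining integral enters as $+2\int_0^1 g$, not $-2\int_0^1 g$, so your second paragraph has the sign and the type of the surviving boundary term backwards (your third paragraph has it closer to right).

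The genuine gap is that you never pin down the scalar factor, and that is exactly where the statement is delicate. Carrying your computation to the end gives $\int_0^1 m(t)f''(ta+(1-t)b)\,dt=\frac{2}{(b-a)^2}\left[\frac{1}{b-a}\int_a^b f(x)\,dx-f\left(\tfrac{a+b}{2}\right)\right]$, so with your symmetrization the right-hand side of the lemma as printed, with the factor $\frac{(b-a)^2}{2}$, equals \emph{twice} the left-hand side; a quick check with $f(x)=x^2$, $a=0$, $b=1$ gives left side $\tfrac{1}{12}$ and right side $\tfrac16$. The identity is true with $\frac{(b-a)^2}{4}$ in place of $\frac{(b-a)^2}{2}$: that is what the paper's own proof actually ends with (it multiplies $I_1+I_2$ by $\frac{(b-a)^2}{4}$), and it is the factor used in all subsequent theorems, so the $\tfrac12$ in the statement is a misprint. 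As written, your proposal can neither be completed to the stated identity nor does it detect the discrepancy; finishing the bookkeeping, recording the constant explicitly, and stating the corrected factor $\frac{(b-a)^2}{4}$ is what is needed.
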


\begin{proof}
It suffices to note that%
\begin{equation*}
\begin{array}{lll}
I_{1} & = & \dint_{0}^{1}m\left( t\right) f^{\prime \prime }(ta+(1-t)b)dt \\ 
&  &  \\ 
& = & \dint_{0}^{1/2}t^{2}f^{\prime \prime
}(ta+(1-t)b)dt+\dint_{1/2}^{1}\left( 1-t\right) ^{2}f^{\prime \prime
}(ta+(1-t)b)dt \\ 
&  &  \\ 
& = & \dfrac{1}{a-b}t^{2}f^{\prime }(ta+(1-t)b)\underset{0}{\overset{1/2}{%
\mid }}-\dfrac{2}{a-b}\dint_{0}^{1/2}tf^{\prime }(ta+(1-t)b)dt \\ 
&  &  \\ 
& + & \dfrac{1}{a-b}\left( 1-t\right) ^{2}f^{\prime }(ta+(1-t)b)\underset{1/2%
}{\overset{1}{\mid }}+\dfrac{2}{a-b}\dint_{1/2}^{1}\left( 1-t\right)
f^{\prime }(ta+(1-t)b)dt \\ 
&  &  \\ 
& = & -\dfrac{1}{4\left( b-a\right) }f^{\prime }(\dfrac{a+b}{2})+\dfrac{2}{%
b-a}\left[ \dfrac{1}{a-b}tf(ta+(1-t)b)\underset{0}{\overset{1/2}{\mid }}-%
\dfrac{1}{a-b}\dint_{0}^{1/2}f(ta+(1-t)b)dt\right] \\ 
&  &  \\ 
& + & \dfrac{1}{4\left( b-a\right) }f^{\prime }(\dfrac{a+b}{2})-\dfrac{2}{b-a%
}\left[ \dfrac{1}{a-b}\left( 1-t\right) f(ta+(1-t)b)\underset{1/2}{\overset{1%
}{\mid }}+\dfrac{1}{a-b}\dint_{1/2}^{1}f(ta+(1-t)b)dt\right] \\ 
&  &  \\ 
& = & \dfrac{2}{b-a}\left[ -\dfrac{1}{2\left( b-a\right) }f(\dfrac{a+b}{2})+%
\dfrac{1}{b-a}\dint_{0}^{1/2}f(ta+(1-t)b)dt\right] \\ 
&  &  \\ 
& - & \dfrac{2}{b-a}\left[ \dfrac{1}{2\left( b-a\right) }f(\dfrac{a+b}{2})-%
\dfrac{1}{b-a}\dint_{1/2}^{1}f(ta+(1-t)b)dt\right] \\ 
&  &  \\ 
& = & -\dfrac{2}{\left( b-a\right) ^{2}}f(\dfrac{a+b}{2})+\dfrac{2}{\left(
b-a\right) ^{2}}\dint_{0}^{1}f(ta+(1-t)b)dt.%
\end{array}%
\end{equation*}%
Using the change of the variable $x=ta+(1-t)b$ for $t\in \left[ 0,1\right] ,$
which gives%
\begin{equation}
\begin{array}{lll}
I_{1} & = & -\dfrac{2}{\left( b-a\right) ^{2}}f(\dfrac{a+b}{2})+\dfrac{2}{%
\left( b-a\right) ^{3}}\dint_{a}^{b}f(x)dx.%
\end{array}
\label{d1}
\end{equation}%
Similarly, we can show that%
\begin{equation}
\begin{array}{lll}
I_{2} & = & \dint_{1/2}^{1}m\left( t\right) f^{\prime \prime }(tb+(1-t)a)dt
\\ 
&  &  \\ 
& = & \dint_{0}^{1/2}t^{2}f^{\prime \prime
}(tb+(1-t)a)dt+\dint_{1/2}^{1}\left( 1-t\right) ^{2}f^{\prime \prime
}(tb+(1-t)a)dt \\ 
&  &  \\ 
& = & -\dfrac{2}{\left( b-a\right) ^{2}}f(\dfrac{a+b}{2})+\dfrac{2}{\left(
b-a\right) ^{3}}\dint_{a}^{b}f(x)dx.%
\end{array}
\label{d2}
\end{equation}%
Thus, summing the equalities $\left( \ref{d1}\right) $ and $\left( \ref{d2}%
\right) $, and multiplying the both sides by $\dfrac{\left( b-a\right) ^{2}}{%
4},$ we obtain%
\begin{equation*}
\begin{array}{lll}
\dfrac{\left( b-a\right) ^{2}}{4}\left( I_{1}+I_{2}\right) & = & \dfrac{1}{%
b-a}\dint_{a}^{b}f(x)dx-f(\dfrac{a+b}{2})%
\end{array}%
\end{equation*}%
which is required.
\end{proof}

\begin{theorem}
\label{z1} Let $f:I\subset \mathbb{R}\rightarrow \mathbb{R}$ be twice
differentiable function on $I^{\circ }$ with $f^{\prime \prime }\in
L_{1}[a,b]$. If $\left\vert f^{\prime \prime }\right\vert $ is convex on $%
[a,b],$\ then%
\begin{equation}
\begin{array}{l}
\left\vert \dfrac{1}{b-a}\dint_{a}^{b}f(x)dx-f(\dfrac{a+b}{2})\right\vert
\leq \dfrac{\left( b-a\right) ^{2}}{24}\left[ \dfrac{\left\vert f^{\prime
\prime }\left( a\right) \right\vert +\left\vert f^{\prime \prime }\left(
b\right) \right\vert }{2}\right] .%
\end{array}
\label{d3}
\end{equation}
\end{theorem}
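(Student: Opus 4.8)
The plan is to start from the integral identity established in Lemma~\ref{lm}; its proof in fact yields
\[
\frac{1}{b-a}\int_{a}^{b}f(x)\,dx-f\!\left(\frac{a+b}{2}\right)=\frac{(b-a)^{2}}{4}\int_{0}^{1}m(t)\bigl[f^{\prime\prime}(ta+(1-t)b)+f^{\prime\prime}(tb+(1-t)a)\bigr]\,dt ,
\]
where $m(t)=t^{2}$ on $[0,\tfrac12)$ and $m(t)=(1-t)^{2}$ on $[\tfrac12,1]$. Taking absolute values and using the triangle inequality for integrals together with $m(t)\ge 0$, I would pass to
\[
\left\vert\frac{1}{b-a}\int_{a}^{b}f(x)\,dx-f\!\left(\frac{a+b}{2}\right)\right\vert\le \frac{(b-a)^{2}}{4}\int_{0}^{1}m(t)\Bigl[\,\bigl\vert f^{\prime\prime}(ta+(1-t)b)\bigr\vert+\bigl\vert f^{\prime\prime}(tb+(1-t)a)\bigr\vert\,\Bigr]\,dt .
\]

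Next I would use the convexity of $\vert f^{\prime\prime}\vert$ on $[a,b]$. Since $ta+(1-t)b$ and $tb+(1-t)a$ are the two convex combinations of the endpoints $a$ and $b$ with weights $t$ and $1-t$, convexity gives $\vert f^{\prime\prime}(ta+(1-t)b)\vert\le t\vert f^{\prime\prime}(a)\vert+(1-t)\vert f^{\prime\prime}(b)\vert$ and $\vert f^{\prime\prime}(tb+(1-t)a)\vert\le (1-t)\vert f^{\prime\prime}(a)\vert+t\vert f^{\prime\prime}(b)\vert$. The useful point is that summing these two estimates cancels the $t$, so the bracketed integrand is dominated by the constant $\vert f^{\prime\prime}(a)\vert+\vert f^{\prime\prime}(b)\vert$, and the bound reduces to
\[
\frac{(b-a)^{2}}{4}\bigl(\vert f^{\prime\prime}(a)\vert+\vert f^{\prime\prime}(b)\vert\bigr)\int_{0}^{1}m(t)\,dt .
\]

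Finally I would evaluate $\int_{0}^{1}m(t)\,dt=\int_{0}^{1/2}t^{2}\,dt+\int_{1/2}^{1}(1-t)^{2}\,dt=\tfrac{1}{24}+\tfrac{1}{24}=\tfrac{1}{12}$, and substituting gives
\[
\left\vert\frac{1}{b-a}\int_{a}^{b}f(x)\,dx-f\!\left(\frac{a+b}{2}\right)\right\vert\le\frac{(b-a)^{2}}{48}\bigl(\vert f^{\prime\prime}(a)\vert+\vert f^{\prime\prime}(b)\vert\bigr)=\frac{(b-a)^{2}}{24}\left[\frac{\vert f^{\prime\prime}(a)\vert+\vert f^{\prime\prime}(b)\vert}{2}\right],
\]
which is exactly (\ref{d3}). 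The argument is essentially mechanical: the points needing care are only the normalizing constant extracted from Lemma~\ref{lm} and the elementary value of $\int_0^1 m(t)\,dt$, while the one mild structural observation is that the symmetric pairing of the two arguments of $f^{\prime\prime}$ is precisely what removes the $t$-dependence inside the integral, leaving $\int_0^1 m$ as the only quantity to compute. I do not expect any genuine obstacle here beyond this bookkeeping.
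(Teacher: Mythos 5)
Your proof is correct and follows essentially the same route as the paper: apply Lemma~\ref{lm} (with the correct factor $\frac{(b-a)^{2}}{4}$ coming from its proof), take absolute values, bound $\vert f''\vert$ at the convex combinations by convexity, and integrate $m$. The only cosmetic difference is that you sum the two endpoint estimates so the $t$-dependence cancels and only $\int_0^1 m(t)\,dt=\tfrac1{12}$ is needed, whereas the paper evaluates the weighted integrals $\int_0^1 m(t)\bigl[t\vert f''(a)\vert+(1-t)\vert f''(b)\vert\bigr]dt=\tfrac{\vert f''(a)\vert+\vert f''(b)\vert}{24}$ directly; both yield the same bound.
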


\begin{proof}
From Lemma $\ref{lm}$ and the convexity of $\left\vert f^{\prime \prime
}\right\vert ,$ it follows that 
\begin{equation}
\begin{array}{l}
\left\vert \dfrac{1}{b-a}\dint_{a}^{b}f(x)dx-f(\dfrac{a+b}{2})\right\vert \\ 
\\ 
\text{ \ \ \ \ \ \ \ \ \ }\leq \dfrac{\left( b-a\right) ^{2}}{4}\left\{
\dint_{0}^{1}\left\vert m\left( t\right) \right\vert \left\vert f^{\prime
\prime }(ta+(1-t)b)\right\vert dt+\dint_{0}^{1}\left\vert m\left( t\right)
\right\vert \left\vert f^{\prime \prime }(tb+(1-t)a)\right\vert dt\right\}
\\ 
\\ 
\text{ \ \ \ \ \ \ \ \ \ }\leq \dfrac{\left( b-a\right) ^{2}}{4}\left\{
\dint_{0}^{1}\left\vert m\left( t\right) \right\vert \left[ t\left\vert
f^{\prime \prime }(a)\right\vert +\left( 1-t\right) \left\vert f^{\prime
\prime }(b)\right\vert \right] dt+\dint_{0}^{1}\left\vert m\left( t\right)
\right\vert \left[ t\left\vert f^{\prime \prime }(b)\right\vert +\left(
1-t\right) \left\vert f^{\prime \prime }(b)\right\vert \right] dt\right\} .%
\end{array}
\label{d4}
\end{equation}%
By simple computation, 
\begin{equation}
\begin{array}{l}
\dint_{0}^{1}m\left( t\right) \left[ t\left\vert f^{\prime \prime
}(a)\right\vert +\left( 1-t\right) \left\vert f^{\prime \prime
}(b)\right\vert \right] dt \\ 
\\ 
\text{ \ \ \ \ \ \ \ \ \ }\leq \dint_{0}^{1/2}t^{2}\left[ t\left\vert
f^{\prime \prime }(a)\right\vert +\left( 1-t\right) \left\vert f^{\prime
\prime }(b)\right\vert \right] dt+\dint_{1/2}^{1}\left( 1-t\right) ^{2}\left[
t\left\vert f^{\prime \prime }(a)\right\vert +\left( 1-t\right) \left\vert
f^{\prime \prime }(b)\right\vert \right] dt \\ 
\\ 
\text{ \ \ \ \ \ \ \ \ \ }=\dfrac{\left\vert f^{\prime \prime
}(a)\right\vert +\left\vert f^{\prime \prime }(b)\right\vert }{24}%
\end{array}
\label{d5}
\end{equation}%
and similarly, 
\begin{equation}
\dint_{0}^{1}m\left( t\right) \left[ t\left\vert f^{\prime \prime
}(b)\right\vert +\left( 1-t\right) \left\vert f^{\prime \prime
}(a)\right\vert \right] dt=\dfrac{\left\vert f^{\prime \prime
}(a)\right\vert +\left\vert f^{\prime \prime }(b)\right\vert }{24}.
\label{d6}
\end{equation}%
Using $\left( \ref{d5}\right) $ and $\left( \ref{d6}\right) $ in $\left( \ref%
{d4}\right) $, we obtain $\left( \ref{d3}\right) .$
\end{proof}

\begin{remark}
We note that the obtained midpoint inequality $(\ref{d3})$ is better than
the inequality $(\ref{H1})$.
\end{remark}

Another similar result may be extended in the following theorem.

\begin{theorem}
\label{z3} Let $f:I\subset \mathbb{R}\rightarrow \mathbb{R}$ be twice
differentiable function on $I^{\circ }$ such that $f^{\prime \prime }\in
L_{1}[a,b]$ where $a,b\in I,$ $a<b$. If $\left\vert f^{\prime \prime
}\right\vert ^{q}$ is convex on $[a,b],$\ $q>1$, then%
\begin{equation}
\begin{array}{l}
\left\vert \dfrac{1}{b-a}\dint_{a}^{b}f(x)dx-f(\dfrac{a+b}{2})\right\vert
\leq \dfrac{\left( b-a\right) ^{2}}{8\left( 2p+1\right) ^{1/p}}\left[ \dfrac{%
\left\vert f^{\prime \prime }(a)\right\vert ^{q}+\left\vert f^{\prime \prime
}(b)\right\vert ^{q}}{2}\right] ^{1/q}.%
\end{array}
\label{d07}
\end{equation}
\end{theorem}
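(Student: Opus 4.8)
The plan is to repeat the proof of Theorem~\ref{z1} up to the point where one estimates the two integrals of $m(t)$ against $|f''|$, and at that point to use H\"older's inequality in place of the elementary bound, since now only $|f''|^q$ (not $|f''|$ itself) is known to be convex. Throughout, $p$ is the conjugate exponent of $q$, i.e.\ $\frac1p+\frac1q=1$. First I would apply Lemma~\ref{lm}, take absolute values, and use the triangle inequality together with $m(t)\ge 0$ on $[0,1]$ to obtain
\[
\left\vert \tfrac{1}{b-a}\int_a^b f(x)\,dx-f\!\left(\tfrac{a+b}{2}\right)\right\vert\le\frac{(b-a)^2}{4}\Bigl\{J_a+J_b\Bigr\},
\]
where $J_a=\int_0^1 m(t)\,|f''(ta+(1-t)b)|\,dt$ and $J_b=\int_0^1 m(t)\,|f''(tb+(1-t)a)|\,dt$.

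Next I would bound $J_a$ by H\"older's inequality with exponents $p,q$: $J_a\le\bigl(\int_0^1 m(t)^p\,dt\bigr)^{1/p}\bigl(\int_0^1 |f''(ta+(1-t)b)|^q\,dt\bigr)^{1/q}$, and similarly for $J_b$. The convexity of $|f''|^q$ gives $|f''(ta+(1-t)b)|^q\le t|f''(a)|^q+(1-t)|f''(b)|^q$, so integrating over $[0,1]$ yields $\int_0^1 |f''(ta+(1-t)b)|^q\,dt\le\frac{|f''(a)|^q+|f''(b)|^q}{2}$, and interchanging $a$ and $b$ gives the same bound for the $J_b$-factor.

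It then remains only to evaluate $\int_0^1 m(t)^p\,dt$. Using the piecewise formula for $m$ and the substitution $t\mapsto 1-t$ on $[\tfrac12,1]$, this integral equals $2\int_0^{1/2}t^{2p}\,dt=\frac{1}{4^p(2p+1)}$, whence $\bigl(\int_0^1 m(t)^p\,dt\bigr)^{1/p}=\frac{1}{4(2p+1)^{1/p}}$. Substituting the three estimates into the displayed inequality and adding the two equal contributions $J_a$ and $J_b$ gives
\[
\frac{(b-a)^2}{4}\cdot\frac{2}{4(2p+1)^{1/p}}\left(\frac{|f''(a)|^q+|f''(b)|^q}{2}\right)^{1/q}=\frac{(b-a)^2}{8(2p+1)^{1/p}}\left(\frac{|f''(a)|^q+|f''(b)|^q}{2}\right)^{1/q},
\]
which is $\left( \ref{d07}\right)$. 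No real obstacle is anticipated: once Lemma~\ref{lm} is available the argument is routine, and the only steps that need care are tracking the factor $\frac{(b-a)^2}{4}$ from the lemma and the exponent arithmetic in the computation of $\int_0^1 m(t)^p\,dt$.
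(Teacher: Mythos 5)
Your proposal is correct and follows essentially the same route as the paper: Lemma \ref{lm}, H\"older's inequality with conjugate exponents $p,q$ applied to $\int_0^1 m(t)\,|f''(\cdot)|\,dt$, the convexity bound $|f''(ta+(1-t)b)|^q\le t|f''(a)|^q+(1-t)|f''(b)|^q$, and the evaluation $\int_0^1 m(t)^p\,dt=\frac{1}{4^p(2p+1)}$. The constant bookkeeping ($\frac{(b-a)^2}{4}\cdot\frac{2}{4(2p+1)^{1/p}}=\frac{(b-a)^2}{8(2p+1)^{1/p}}$) matches the paper's computation, so nothing is missing.
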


\begin{proof}
From Lemma $\ref{lm}$ and using well known H\"{o}lder's integral inequality
, we get, 
\begin{equation*}
\begin{array}{l}
\left\vert \dfrac{1}{b-a}\dint_{a}^{b}f(x)dx-f(\dfrac{a+b}{2})\right\vert \\ 
\\ 
\text{ \ \ \ \ \ \ \ \ \ }\leq \dfrac{\left( b-a\right) ^{2}}{4}\left(
\dint_{0}^{1}\left\vert m\left( t\right) \right\vert ^{p}dt\right)
^{1/p}\left\{ \left( \dint_{0}^{1}\left\vert f^{\prime \prime
}(ta+(1-t)b)\right\vert ^{q}dt\right) ^{1/q}\right. \\ 
\\ 
\text{ \ \ \ \ \ \ \ \ \ }\left. +\left( \dint_{0}^{1}\left\vert f^{\prime
\prime }(tb+(1-t)a)\right\vert ^{q}dt\right) ^{1/q}\right\} .%
\end{array}%
\end{equation*}%
Since $\left\vert f^{\prime \prime }\right\vert ^{q}$ is convex on $[a,b]$,
we known that for $t\in \left[ 0,1\right] $%
\begin{equation*}
\begin{array}{l}
\left\vert f^{\prime \prime }(ta+(1-t)b)\right\vert ^{q}\leq t\left\vert
f^{\prime \prime }(a)\right\vert ^{q}+\left( 1-t\right) \left\vert f^{\prime
\prime }(b)\right\vert ^{q}.%
\end{array}%
\end{equation*}%
Hence,%
\begin{equation*}
\begin{array}{l}
\left\vert \dfrac{1}{b-a}\dint_{a}^{b}f(x)dx-f(\dfrac{a+b}{2})\right\vert \\ 
\\ 
\text{ \ \ \ \ \ \ }\leq \dfrac{\left( b-a\right) ^{2}}{16\left( 2p+1\right)
^{1/p}}\left\{ \left( \dint_{0}^{1}\left[ t\left\vert f^{\prime \prime
}(a)\right\vert ^{q}+\left( 1-t\right) \left\vert f^{\prime \prime
}(b)\right\vert ^{q}\right] dt\right) ^{\frac{1}{q}}+\left( \dint_{0}^{1}%
\left[ t\left\vert f^{\prime \prime }(b)\right\vert ^{q}+\left( 1-t\right)
\left\vert f^{\prime \prime }(a)\right\vert ^{q}\right] dt\right) ^{\frac{1}{%
q}}\right\} \\ 
\\ 
\text{ \ \ \ \ \ \ }=\dfrac{\left( b-a\right) ^{2}}{8\left( 2p+1\right)
^{1/p}}\left[ \dfrac{\left\vert f^{\prime \prime }(a)\right\vert
^{q}+\left\vert f^{\prime \prime }(b)\right\vert ^{q}}{2}\right] ^{1/q},%
\end{array}%
\end{equation*}%
where we have used the fact that 
\begin{equation*}
\dint_{0}^{1}\left\vert m\left( t\right) \right\vert
^{p}dt=\dint_{0}^{1/2}t^{2p}dt+\dint_{1/2}^{1}\left( 1-t\right) ^{2p}dt=%
\frac{1}{4^{p}\left( 2p+1\right) }
\end{equation*}%
which completes the proof.
\end{proof}

An improvement of the constants in Theorem \ref{z3} and a consolidation of
this result with Theorem \ref{z1} are given in the following theorem.

\begin{theorem}
\label{z2} Let $f:I\subset \mathbb{R}\rightarrow \mathbb{R}$ be twice
differentiable function on $I^{\circ }$ such that $f^{\prime \prime }\in
L_{1}[a,b]$ where $a,b\in I,$ $a<b$. If $\left\vert f^{\prime \prime
}\right\vert ^{q}$ is convex on $[a,b],$\ $q\geq 1$, then%
\begin{equation}
\begin{array}{l}
\left\vert \dfrac{1}{b-a}\dint_{a}^{b}f(x)dx-f(\dfrac{a+b}{2})\right\vert
\leq \dfrac{\left( b-a\right) ^{2}}{24}\left[ \dfrac{\left\vert f^{\prime
\prime }\left( a\right) \right\vert ^{q}+\left\vert f^{\prime \prime }\left(
b\right) \right\vert ^{q}}{2}\right] ^{\frac{1}{q}}.%
\end{array}
\label{d7}
\end{equation}
\end{theorem}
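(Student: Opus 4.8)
The plan is to run the argument exactly as in the proofs of Theorems~\ref{z1} and~\ref{z3}, but to replace the plain H\"older inequality used for Theorem~\ref{z3} by the power-mean (weighted Jensen) inequality. This single change yields the constant $\tfrac{1}{24}$ in place of $\tfrac{1}{8(2p+1)^{1/p}}$ and, since the power-mean step degenerates to an identity at $q=1$, it simultaneously recovers Theorem~\ref{z1}; hence the statement is indeed the promised consolidation of the two earlier results.

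First I would apply Lemma~\ref{lm} and the triangle inequality, as in the proof of Theorem~\ref{z1}, to obtain (using $m(t)\ge 0$)
\[
\left\vert \dfrac{1}{b-a}\dint_{a}^{b}f(x)\,dx-f\!\left(\dfrac{a+b}{2}\right)\right\vert \le \dfrac{(b-a)^{2}}{4}\left\{\dint_{0}^{1}m(t)\,|f''(ta+(1-t)b)|\,dt+\dint_{0}^{1}m(t)\,|f''(tb+(1-t)a)|\,dt\right\}.
\]
Then, to each of the two integrals I would apply the power-mean inequality $\int_0^1 m\,h \le \bigl(\int_0^1 m\bigr)^{1-1/q}\bigl(\int_0^1 m\,h^{q}\bigr)^{1/q}$, valid for $q\ge 1$ since $m\ge 0$ (equivalently, H\"older with exponents $q/(q-1)$ and $q$), for instance
\[
\dint_{0}^{1}m(t)\,|f''(ta+(1-t)b)|\,dt \le \left(\dint_{0}^{1}m(t)\,dt\right)^{1-\frac1q}\left(\dint_{0}^{1}m(t)\,|f''(ta+(1-t)b)|^{q}\,dt\right)^{\frac1q},
\]
and likewise for the companion integral. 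This is the key step; for $q=1$ it is an equality, which is precisely why the final bound contains Theorem~\ref{z1}.

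Next I would substitute the elementary evaluations already present in the paper. On one hand $\dint_{0}^{1}m(t)\,dt=\dint_{0}^{1/2}t^{2}\,dt+\dint_{1/2}^{1}(1-t)^{2}\,dt=\tfrac{1}{12}$. On the other hand, invoking the convexity of $|f''|^{q}$ in the form $|f''(ta+(1-t)b)|^{q}\le t|f''(a)|^{q}+(1-t)|f''(b)|^{q}$ and then using the same integral identity as in $(\ref{d5})$–$(\ref{d6})$ (with $|f''(a)|,|f''(b)|$ replaced by $|f''(a)|^{q},|f''(b)|^{q}$),
\[
\dint_{0}^{1}m(t)\bigl[t|f''(a)|^{q}+(1-t)|f''(b)|^{q}\bigr]\,dt=\dint_{0}^{1}m(t)\bigl[t|f''(b)|^{q}+(1-t)|f''(a)|^{q}\bigr]\,dt=\dfrac{|f''(a)|^{q}+|f''(b)|^{q}}{24}.
\]
Combining the three displays yields a bound of the form $\dfrac{(b-a)^{2}}{4}\cdot 2\cdot\bigl(\tfrac{1}{12}\bigr)^{1-1/q}\bigl(\tfrac{|f''(a)|^{q}+|f''(b)|^{q}}{24}\bigr)^{1/q}$.

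Finally I would simplify the constant: $\dfrac{(b-a)^{2}}{2}\bigl(\tfrac{1}{12}\bigr)^{1-1/q}\bigl(\tfrac{1}{24}\bigr)^{1/q}=\dfrac{(b-a)^{2}}{24}\bigl(\tfrac{12}{24}\bigr)^{1/q}=\dfrac{(b-a)^{2}}{24}\bigl(\tfrac12\bigr)^{1/q}$, so the right-hand side becomes exactly $\dfrac{(b-a)^{2}}{24}\Bigl[\dfrac{|f''(a)|^{q}+|f''(b)|^{q}}{2}\Bigr]^{1/q}$, which is $(\ref{d7})$. I do not expect a genuine obstacle here: the only delicate points are checking that the power-mean step is legitimate (it needs $m\ge 0$ and $q\ge 1$) and keeping the bookkeeping of the exponents $1-\tfrac1q,\tfrac1q$ and the constants $\tfrac{1}{12},\tfrac{1}{24}$ straight in the last simplification.
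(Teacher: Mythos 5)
Your proposal is correct and follows essentially the same route as the paper: Lemma~\ref{lm}, the power-mean (weighted H\"older) inequality with weight $m(t)$, convexity of $\left\vert f^{\prime\prime}\right\vert^{q}$, and the evaluations $\int_{0}^{1}m(t)\,dt=\tfrac{1}{12}$ and $\int_{0}^{1}m(t)\left[t\left\vert f^{\prime\prime}(a)\right\vert^{q}+(1-t)\left\vert f^{\prime\prime}(b)\right\vert^{q}\right]dt=\tfrac{\left\vert f^{\prime\prime}(a)\right\vert^{q}+\left\vert f^{\prime\prime}(b)\right\vert^{q}}{24}$, with the same final simplification of constants. Your bookkeeping with the exponents $1-\tfrac1q$ and $\tfrac1q$ is accurate, so nothing further is needed.
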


\begin{proof}
From Lemma $\ref{lm}$ and using well known power mean inequality , we get, 
\begin{equation*}
\begin{array}{l}
\left\vert \dfrac{1}{b-a}\dint_{a}^{b}f(x)dx-f(\dfrac{a+b}{2})\right\vert \\ 
\\ 
\text{ \ \ \ \ \ \ \ \ \ }\leq \dfrac{\left( b-a\right) ^{2}}{4}\left(
\dint_{0}^{1}\left\vert m\left( t\right) \right\vert dt\right) ^{1/p}\left\{
\left( \dint_{0}^{1}\left\vert m\left( t\right) \right\vert \left\vert
f^{\prime \prime }(ta+(1-t)b)\right\vert ^{q}dt\right) ^{1/q}\right. \\ 
\\ 
\text{ \ \ \ \ \ \ \ \ \ }\left. +\left( \dint_{0}^{1}\left\vert m\left(
t\right) \right\vert \left\vert f^{\prime \prime }(tb+(1-t)a)\right\vert
^{q}dt\right) ^{1/q}\right\} .%
\end{array}%
\end{equation*}%
Since $\left\vert f^{\prime \prime }\right\vert ^{q}$ is convex on $[a,b]$,
we known that for $t\in \left[ 0,1\right] $%
\begin{equation*}
\begin{array}{l}
\left\vert f^{\prime \prime }(ta+(1-t)b)\right\vert ^{q}\leq t\left\vert
f^{\prime \prime }(a)\right\vert ^{q}+\left( 1-t\right) \left\vert f^{\prime
\prime }(b)\right\vert ^{q}.%
\end{array}%
\end{equation*}%
Hence,%
\begin{equation*}
\begin{array}{l}
\left\vert \dfrac{1}{b-a}\dint_{a}^{b}f(x)dx-f(\dfrac{a+b}{2})\right\vert \\ 
\\ 
\text{ \ \ \ \ \ \ \ \ \ }\leq \dfrac{\left( b-a\right) ^{2}}{4}\dfrac{1}{%
\left( 12\right) ^{1/p}}\left\{ \left( \dint_{0}^{1/2}t^{2}\left[
t\left\vert f^{\prime \prime }(a)\right\vert ^{q}+\left( 1-t\right)
\left\vert f^{\prime \prime }(b)\right\vert ^{q}\right] dt\right. \right. \\ 
\\ 
\text{ \ \ \ \ \ \ \ \ \ }\left. +\dint_{1/2}^{1}\left( 1-t\right) ^{2}\left[
t\left\vert f^{\prime \prime }(a)\right\vert ^{q}+\left( 1-t\right)
\left\vert f^{\prime \prime }(b)\right\vert ^{q}\right] dt\right) ^{\frac{1}{%
q}} \\ 
\\ 
\text{ \ \ \ \ \ \ \ \ \ }\left. +\left( \dint_{0}^{1/2}t^{2}\left[
t\left\vert f^{\prime \prime }(b)\right\vert ^{q}+\left( 1-t\right)
\left\vert f^{\prime \prime }(a)\right\vert ^{q}\right] dt+\dint_{1/2}^{1}%
\left( 1-t\right) ^{2}\left[ t\left\vert f^{\prime \prime }(b)\right\vert
^{q}+\left( 1-t\right) \left\vert f^{\prime \prime }(a)\right\vert ^{q}%
\right] dt\right) ^{1/q}\right\} \\ 
\\ 
\text{ \ \ \ \ \ \ \ \ }\ =\dfrac{\left( b-a\right) ^{2}}{4}\dfrac{2}{\left(
12\right) ^{1/p}}\left[ \dfrac{\left\vert f^{\prime \prime }(a)\right\vert
^{q}+\left\vert f^{\prime \prime }(b)\right\vert ^{q}}{24}\right] ^{1/q},%
\end{array}%
\end{equation*}%
where we have used the fact that 
\begin{equation*}
\dint_{0}^{1}\left\vert m\left( t\right) \right\vert
dt=\dint_{0}^{1/2}t^{2}dt+\dint_{1/2}^{1}\left( 1-t\right) ^{2}dt=\frac{1}{12%
}
\end{equation*}%
which completes the proof.
\end{proof}

\begin{remark}
For $q=1,$ this theorem reduces Theorem \ref{z1}. For $q=p/(p-1),\ p>1,\ $we
have an improvement of the constants in Theorem \ref{z3}, since $%
3^{p}>(2p+1) $ if $p>1$ and accordingly%
\begin{equation*}
\frac{1}{24}<\frac{1}{8(2p+1)^{\frac{1}{p}}}.
\end{equation*}
\end{remark}

\begin{remark}
We note that the obtained midpoint inequality $(\ref{d7})$ is better than
the inequality $(\ref{H2}).$
\end{remark}

\section{Hermite-Hadamard type inequalities for quasi-convex functions}

\begin{theorem}
\label{thm} Let $f:I\subset \mathbb{R}\rightarrow \mathbb{R}$ be twice
differentiable function on $I^{\circ }$ such that $f^{\prime \prime }\in
L_{1}[a,b]$ where $a,b\in I,$ $a<b$. If $\left\vert f^{\prime \prime
}\right\vert $ is quasi-convex on $[a,b],$\ then the following inequality
holds:%
\begin{equation}
\begin{array}{l}
\left\vert \dfrac{1}{b-a}\dint_{a}^{b}f(x)dx-f(\dfrac{a+b}{2})\right\vert
\leq \dfrac{\left( b-a\right) ^{2}}{24}\sup \left\{ \left\vert f^{\prime
\prime }\left( a\right) \right\vert ,\left\vert f^{\prime \prime }\left(
b\right) \right\vert \right\} .%
\end{array}
\label{d8}
\end{equation}
\end{theorem}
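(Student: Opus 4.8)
The plan is to mirror the proof of Theorem \ref{z1}, substituting the quasi-convexity bound for the convexity bound at the single place where the structure of $|f''|$ is used. First I would start from Lemma \ref{lm}, apply the triangle inequality to the integral, and pull the absolute value inside to obtain
\begin{equation*}
\left\vert \dfrac{1}{b-a}\dint_{a}^{b}f(x)dx-f(\dfrac{a+b}{2})\right\vert
\leq \dfrac{\left( b-a\right) ^{2}}{4}\left\{
\dint_{0}^{1}m(t)\left\vert f^{\prime \prime }(ta+(1-t)b)\right\vert dt
+\dint_{0}^{1}m(t)\left\vert f^{\prime \prime }(tb+(1-t)a)\right\vert dt\right\},
\end{equation*}
using that $m(t)\geq 0$ on $[0,1]$.

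Next I would invoke quasi-convexity of $|f''|$: for every $t\in[0,1]$ the point $ta+(1-t)b$ lies in $[a,b]$, so $\left\vert f^{\prime \prime }(ta+(1-t)b)\right\vert \leq \sup\{|f''(a)|,|f''(b)|\}$, and likewise for $tb+(1-t)a$. Since this supremum is a constant independent of $t$, it factors out of both integrals, leaving
\begin{equation*}
\left\vert \dfrac{1}{b-a}\dint_{a}^{b}f(x)dx-f(\dfrac{a+b}{2})\right\vert
\leq \dfrac{\left( b-a\right) ^{2}}{4}\cdot 2\sup\{|f''(a)|,|f''(b)|\}\dint_{0}^{1}m(t)\,dt.
\end{equation*}
Then I would use the computation already recorded in the proof of Theorem \ref{z2}, namely $\dint_{0}^{1}m(t)\,dt=\dint_{0}^{1/2}t^{2}\,dt+\dint_{1/2}^{1}(1-t)^{2}\,dt=\tfrac{1}{12}$, which turns the prefactor $\tfrac{(b-a)^2}{4}\cdot 2\cdot\tfrac{1}{12}$ into $\tfrac{(b-a)^2}{24}$, giving exactly $(\ref{d8})$.

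There is essentially no obstacle here: the only subtlety worth stating carefully is that quasi-convexity is applied pointwise in $t$ before integrating, and that the resulting bound is a constant that commutes with the integral — unlike the convex case, where the bound $t|f''(a)|+(1-t)|f''(b)|$ depends on $t$ and must be integrated against $m(t)$ term by term. One could also remark, as the authors do elsewhere, that since $\sup\{|f''(a)|,|f''(b)|\}\geq \tfrac{|f''(a)|+|f''(b)|}{2}$, inequality $(\ref{d8})$ is weaker than $(\ref{d3})$ when $|f''|$ happens to be convex, but is available under the strictly more general quasi-convexity hypothesis.
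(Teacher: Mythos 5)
Your proposal is correct and follows essentially the same route as the paper's own proof: apply the triangle inequality to Lemma \ref{lm}, bound $\left\vert f''(ta+(1-t)b)\right\vert$ and $\left\vert f''(tb+(1-t)a)\right\vert$ pointwise by $\sup\{\left\vert f''(a)\right\vert,\left\vert f''(b)\right\vert\}$ via quasi-convexity, and use $\int_{0}^{1}m(t)\,dt=\tfrac{1}{12}$ to get the constant $\tfrac{(b-a)^{2}}{24}$. The only cosmetic difference is that the paper keeps the supremum inside the two integrals over $[0,\tfrac12]$ and $[\tfrac12,1]$ rather than factoring it out of $\int_{0}^{1}m(t)\,dt$ as you do.
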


\begin{proof}
From Lemma $\ref{lm},$ we have 
\begin{equation*}
\begin{array}{l}
\left\vert \dfrac{1}{b-a}\dint_{a}^{b}f(x)dx-f(\dfrac{a+b}{2})\right\vert \\ 
\\ 
\text{ \ \ \ \ \ \ \ \ \ }\leq \dfrac{\left( b-a\right) ^{2}}{4}%
\dint_{0}^{1}\left\vert m\left( t\right) \right\vert \left[ \left\vert
f^{\prime \prime }(ta+(1-t)b)\right\vert +\left\vert f^{\prime \prime
}(tb+(1-t)a)\right\vert \right] dt \\ 
\\ 
\text{ \ \ \ \ \ \ \ \ \ }\leq \dfrac{\left( b-a\right) ^{2}}{4}2\left[
\dint_{0}^{1/2}t^{2}\sup \left\{ \left\vert f^{\prime \prime }(a)\right\vert
,\left\vert f^{\prime \prime }(b)\right\vert \right\}
dt+\dint_{1/2}^{1}\left( 1-t\right) ^{2}\sup \left\{ \left\vert f^{\prime
\prime }(b)\right\vert ,\left\vert f^{\prime \prime }(a)\right\vert \right\}
dt\right] \\ 
\\ 
\text{ \ \ \ \ \ \ \ \ \ }=\dfrac{\left( b-a\right) ^{2}}{24}\sup \left\{
\left\vert f^{\prime \prime }(a)\right\vert ,\left\vert f^{\prime \prime
}(b)\right\vert \right\} .%
\end{array}%
\end{equation*}
\end{proof}

Therefore, we can deduce the following result for quasi-convex functions.

\begin{corollary}
Let $f$ be as in Theorem $\ref{thm}$. Additionally, if
\end{corollary}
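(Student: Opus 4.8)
The plan is to obtain the corollary as an immediate specialization of Theorem~\ref{thm}, so that no new integral estimate is required. First I would invoke Theorem~\ref{thm}, which already supplies
\[
\left\vert \frac{1}{b-a}\int_{a}^{b}f(x)\,dx-f\!\left(\frac{a+b}{2}\right)\right\vert \leq \frac{(b-a)^{2}}{24}\,\sup\left\{\left\vert f^{\prime \prime }(a)\right\vert ,\left\vert f^{\prime \prime }(b)\right\vert \right\}.
\]
All that then remains is to bound the supremum on the right-hand side by the quantity appearing in the additional hypothesis and substitute.

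If the extra assumption is a monotonicity condition on $\left\vert f^{\prime \prime }\right\vert$ — say $\left\vert f^{\prime \prime }\right\vert$ is nondecreasing on $[a,b]$ — then $\sup\{\left\vert f^{\prime \prime }(a)\right\vert ,\left\vert f^{\prime \prime }(b)\right\vert \}=\left\vert f^{\prime \prime }(b)\right\vert$, and symmetrically the value is $\left\vert f^{\prime \prime }(a)\right\vert$ when $\left\vert f^{\prime \prime }\right\vert$ is nonincreasing. If instead the hypothesis is a uniform bound $\left\vert f^{\prime \prime }(x)\right\vert \leq M$ for all $x\in[a,b]$, then trivially $\sup\{\left\vert f^{\prime \prime }(a)\right\vert ,\left\vert f^{\prime \prime }(b)\right\vert \}\leq M$. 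In either case one plugs the simplified value into the displayed inequality and reads off the conclusion. It is worth remarking that a quasi-convex function need not be monotone, so the added hypothesis is genuinely a restriction and the corollary a genuine sharpening of Theorem~\ref{thm} rather than a reformulation of quasi-convexity.

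I do not expect any real obstacle here: the corollary is a one-line deduction from Theorem~\ref{thm}, the only point needing a word of justification being the elementary observation that the maximum of $\left\vert f^{\prime \prime }\right\vert$ over the two endpoints is controlled by whichever monotonicity or boundedness assumption is imposed. For completeness one could also record the companion endpoint form (with $\left\vert f^{\prime \prime }(a)\right\vert$ in place of $\left\vert f^{\prime \prime }(b)\right\vert$) and note that, in the bounded case, the constant $\frac{(b-a)^{2}}{24}$ is inherited unchanged from Lemma~\ref{lm} via the computation $\int_{0}^{1}\left\vert m(t)\right\vert \,dt=\tfrac{1}{12}$ already carried out in the proof of Theorem~\ref{z2}.
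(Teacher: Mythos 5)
Your proposal is correct and matches the paper's argument: the corollary's two cases ($\left\vert f''\right\vert$ increasing, resp.\ decreasing) follow directly from Theorem~\ref{thm} by noting that the supremum $\sup\{\left\vert f''(a)\right\vert,\left\vert f''(b)\right\vert\}$ collapses to $\left\vert f''(b)\right\vert$, resp.\ $\left\vert f''(a)\right\vert$, which is exactly the one-line deduction the paper gives. Your correctly anticipated the monotonicity hypotheses, so no further comment is needed.
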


$1^{\circ }$ $\left\vert f^{\prime \prime }\right\vert $ is increasing, then
we have 
\begin{equation}
\begin{array}{l}
\left\vert \dfrac{1}{b-a}\dint_{a}^{b}f(x)dx-f(\dfrac{a+b}{2})\right\vert
\leq \dfrac{\left( b-a\right) ^{2}}{24}\left\vert f^{\prime \prime
}(b)\right\vert .%
\end{array}
\label{d9}
\end{equation}

$2^{\circ }$ $\left\vert f^{\prime \prime }\right\vert $ is decreasing, then
we have 
\begin{equation}
\begin{array}{l}
\left\vert \dfrac{1}{b-a}\dint_{a}^{b}f(x)dx-f(\dfrac{a+b}{2})\right\vert
\leq \dfrac{\left( b-a\right) ^{2}}{24}\left\vert f^{\prime \prime
}(a)\right\vert .%
\end{array}
\label{d10}
\end{equation}

\begin{proof}
It follows directly by Theorem $\ref{thm}.$
\end{proof}

\begin{theorem}
\label{thm0} Let $f:I\subset \mathbb{R}\rightarrow \mathbb{R}$ be twice
differentiable function on $I^{\circ }$ such that $f^{\prime \prime }\in
L_{1}[a,b],$ where $a,b\in I,$ $a<b$. If $\left\vert f^{\prime \prime
}\right\vert ^{q}$ is quasi-convex on $[a,b],$\ $q\geq 1$, then the
following inequality holds:%
\begin{equation}
\begin{array}{l}
\left\vert \dfrac{1}{b-a}\dint_{a}^{b}f(x)dx-f(\dfrac{a+b}{2})\right\vert
\leq \dfrac{\left( b-a\right) ^{2}}{8\left( 2p+1\right) ^{\frac{1}{p}}}%
\left( \sup \left\{ \left\vert f^{\prime \prime }(a)\right\vert
^{q},\left\vert f^{\prime \prime }(b)\right\vert ^{q}\right\} \right) ^{1/q}%
\end{array}%
\end{equation}%
where $\frac{1}{p}+\frac{1}{q}=1.$
\end{theorem}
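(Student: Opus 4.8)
The plan is to follow the same template used in the proofs of Theorem \ref{z3} and Theorem \ref{z2}, combining Lemma \ref{lm} with H\"older's inequality and then exploiting quasi-convexity of $\left\vert f^{\prime\prime}\right\vert^{q}$. First I would apply Lemma \ref{lm} and take absolute values inside the integral to obtain
\begin{equation*}
\left\vert \dfrac{1}{b-a}\dint_{a}^{b}f(x)dx-f(\tfrac{a+b}{2})\right\vert \leq \dfrac{\left(b-a\right)^{2}}{4}\dint_{0}^{1}\left\vert m(t)\right\vert \left[ \left\vert f^{\prime\prime}(ta+(1-t)b)\right\vert +\left\vert f^{\prime\prime}(tb+(1-t)a)\right\vert \right] dt.
\end{equation*}
Then, exactly as in Theorem \ref{z3}, I would split each of the two integrals via H\"older with exponents $p,q$, pulling out the factor $\left(\dint_{0}^{1}\left\vert m(t)\right\vert^{p}dt\right)^{1/p}$; the paper already records $\dint_{0}^{1}\left\vert m(t)\right\vert^{p}dt=\frac{1}{4^{p}(2p+1)}$, so that factor equals $\frac{1}{4}(2p+1)^{-1/p}$.

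Next I would bound the remaining $L^{q}$ factors. Since $ta+(1-t)b$ lies between $a$ and $b$ for $t\in[0,1]$, quasi-convexity of $\left\vert f^{\prime\prime}\right\vert^{q}$ gives the pointwise estimate $\left\vert f^{\prime\prime}(ta+(1-t)b)\right\vert^{q}\leq \sup\{\left\vert f^{\prime\prime}(a)\right\vert^{q},\left\vert f^{\prime\prime}(b)\right\vert^{q}\}$ for every $t$, and likewise for $tb+(1-t)a$. Hence each factor $\left(\dint_{0}^{1}\left\vert f^{\prime\prime}(\cdot)\right\vert^{q}dt\right)^{1/q}$ is at most $\left(\sup\{\left\vert f^{\prime\prime}(a)\right\vert^{q},\left\vert f^{\prime\prime}(b)\right\vert^{q}\}\right)^{1/q}$. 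Adding the two identical bounds produces a factor $2$, and multiplying everything together gives
\begin{equation*}
\left\vert \dfrac{1}{b-a}\dint_{a}^{b}f(x)dx-f(\tfrac{a+b}{2})\right\vert \leq \dfrac{\left(b-a\right)^{2}}{4}\cdot\dfrac{1}{4(2p+1)^{1/p}}\cdot 2\left(\sup\{\left\vert f^{\prime\prime}(a)\right\vert^{q},\left\vert f^{\prime\prime}(b)\right\vert^{q}\}\right)^{1/q},
\end{equation*}
and the constant $\frac{2}{16}=\frac{1}{8}$ yields exactly the claimed bound $\frac{(b-a)^{2}}{8(2p+1)^{1/p}}\left(\sup\{\left\vert f^{\prime\prime}(a)\right\vert^{q},\left\vert f^{\prime\prime}(b)\right\vert^{q}\}\right)^{1/q}$.

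There is really no serious obstacle here: the argument is a routine adaptation of Theorem \ref{z3}, with the convex combination bound $t\left\vert f^{\prime\prime}(a)\right\vert^{q}+(1-t)\left\vert f^{\prime\prime}(b)\right\vert^{q}$ replaced by the constant supremum bound, which actually makes the $L^{q}$ integrals trivial to evaluate. The only point requiring a little care is the case $q=1$ (allowed since the statement says $q\geq 1$): then $p=\infty$ and H\"older degenerates, so one instead uses the crude bound $\left\vert m(t)\right\vert\leq \sup_{[0,1]}\left\vert m(t)\right\vert$ is not sharp enough; rather one directly estimates $\dint_{0}^{1}\left\vert m(t)\right\vert\,dt=\frac{1}{12}$ and applies quasi-convexity pointwise, recovering the sharper constant $\frac{1}{24}$ of Theorem \ref{thm}. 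For $q>1$ the displayed constant is the correct one, so I would either state the theorem for $q>1$ or handle $q=1$ separately by invoking Theorem \ref{thm} directly.
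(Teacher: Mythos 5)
Your proposal is correct and follows essentially the same route as the paper's own proof: Lemma \ref{lm}, H\"older's inequality with the computed value $\int_{0}^{1}\left\vert m(t)\right\vert^{p}dt=\frac{1}{4^{p}(2p+1)}$, and the pointwise quasi-convexity bound replacing each $L^{q}$ factor by the supremum. Your additional observation that the H\"older argument really requires $q>1$ (with $q=1$ handled separately via Theorem \ref{thm}) is a fair point of care that the paper itself glosses over, but it does not change the substance of the argument.
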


\begin{proof}
From Lemma $\ref{lm}$, using the well known H\"{o}lder's integral inequality
, we have, 
\begin{equation*}
\begin{array}{l}
\left\vert \dfrac{1}{b-a}\dint_{a}^{b}f(x)dx-f(\dfrac{a+b}{2})\right\vert \\ 
\\ 
\text{ \ \ \ \ \ \ \ \ \ }\leq \dfrac{\left( b-a\right) ^{2}}{4}\left(
\dint_{0}^{1}\left\vert m\left( t\right) \right\vert ^{p}dt\right)
^{1/p}\left\{ \left( \dint_{0}^{1}\left\vert f^{\prime \prime
}(ta+(1-t)b)\right\vert ^{q}dt\right) ^{1/q}\right. \\ 
\\ 
\text{ \ \ \ \ \ \ \ \ \ }\left. +\left( \dint_{0}^{1}\left\vert f^{\prime
\prime }(tb+(1-t)a)\right\vert ^{q}dt\right) ^{1/q}\right\} . \\ 
\\ 
\text{ \ \ \ \ \ \ \ \ \ }\leq \dfrac{\left( b-a\right) ^{2}}{16\left(
2p+1\right) ^{\frac{1}{p}}}\left\{ \left( \dint_{0}^{1}\sup \left\{
\left\vert f^{\prime \prime }(a)\right\vert ^{q},\left\vert f^{\prime \prime
}(b)\right\vert ^{q}\right\} dt\right) ^{1/q}+\left( \dint_{0}^{1}\sup
\left\{ \left\vert f^{\prime \prime }(b)\right\vert ^{q},\left\vert
f^{\prime \prime }(a)\right\vert ^{q}\right\} dt\right) ^{1/q}\right\} \\ 
\\ 
\text{ \ \ \ \ \ \ \ \ \ }=\dfrac{\left( b-a\right) ^{2}}{8\left(
2p+1\right) ^{\frac{1}{p}}}\left( \sup \left\{ \left\vert f^{\prime \prime
}(a)\right\vert ^{q},\left\vert f^{\prime \prime }(b)\right\vert
^{q}\right\} \right) ^{1/q}.%
\end{array}%
\end{equation*}%
where we have used the fact that 
\begin{equation*}
\dint_{0}^{1}\left\vert m\left( t\right) \right\vert
^{p}dt=\dint_{0}^{1/2}t^{2p}dt+\dint_{1/2}^{1}\left( 1-t\right) ^{2p}dt=%
\frac{1}{4^{p}\left( 2p+1\right) }
\end{equation*}%
which completes the proof.
\end{proof}

\begin{theorem}
\label{thm1} Let $f:I\subset \mathbb{R}\rightarrow \mathbb{R}$ be twice
differentiable function on $I^{\circ }$ such that $f^{\prime \prime }\in
L_{1}[a,b],$ where $a,b\in I,$ $a<b$. If $\left\vert f^{\prime \prime
}\right\vert ^{q}$ is quasi-convex on $[a,b],$\ $q\geq 1$, then the
following inequality holds:%
\begin{equation}
\begin{array}{l}
\left\vert \dfrac{1}{b-a}\dint_{a}^{b}f(x)dx-f(\dfrac{a+b}{2})\right\vert
\leq \dfrac{\left( b-a\right) ^{2}}{24}\left( \sup \left\{ \left\vert
f^{\prime \prime }(a)\right\vert ^{q},\left\vert f^{\prime \prime
}(b)\right\vert ^{q}\right\} \right) ^{1/q}.%
\end{array}
\label{d11}
\end{equation}
\end{theorem}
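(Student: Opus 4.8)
The plan is to run exactly the same argument as in the proof of Theorem~\ref{z2}, using the quasi-convexity hypothesis in place of convexity. Starting from Lemma~\ref{lm} and applying the triangle inequality to the integral, I would first obtain
\begin{equation*}
\left\vert \dfrac{1}{b-a}\dint_{a}^{b}f(x)\,dx-f\left(\dfrac{a+b}{2}\right)\right\vert \leq \dfrac{\left(b-a\right)^{2}}{4}\dint_{0}^{1}\left\vert m\left(t\right)\right\vert \left[\left\vert f^{\prime\prime}(ta+(1-t)b)\right\vert +\left\vert f^{\prime\prime}(tb+(1-t)a)\right\vert\right]dt .
\end{equation*}
To each of the two integrals $\dint_{0}^{1}\left\vert m(t)\right\vert \left\vert f^{\prime\prime}(\cdot)\right\vert\,dt$ I would then apply the power mean inequality in the weighted form $\dint_{0}^{1}gh \leq \left(\dint_{0}^{1}g\right)^{1-1/q}\left(\dint_{0}^{1}gh^{q}\right)^{1/q}$ with weight $g(t)=\left\vert m(t)\right\vert$ and $q\geq 1$; this is precisely the step already used for Theorem~\ref{z2}.

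Next I would invoke the quasi-convexity of $\left\vert f^{\prime\prime}\right\vert^{q}$ to bound, for every $t\in\left[0,1\right]$,
\begin{equation*}
\left\vert f^{\prime\prime}(ta+(1-t)b)\right\vert^{q}\leq \sup\left\{\left\vert f^{\prime\prime}(a)\right\vert^{q},\left\vert f^{\prime\prime}(b)\right\vert^{q}\right\}, \qquad \left\vert f^{\prime\prime}(tb+(1-t)a)\right\vert^{q}\leq \sup\left\{\left\vert f^{\prime\prime}(a)\right\vert^{q},\left\vert f^{\prime\prime}(b)\right\vert^{q}\right\},
\end{equation*}
so that $\dint_{0}^{1}\left\vert m(t)\right\vert\left\vert f^{\prime\prime}(\cdot)\right\vert^{q}\,dt \leq \left(\dint_{0}^{1}\left\vert m(t)\right\vert\,dt\right)\sup\left\{\left\vert f^{\prime\prime}(a)\right\vert^{q},\left\vert f^{\prime\prime}(b)\right\vert^{q}\right\}$. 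The only computation required is the elementary one already recorded in the proof of Theorem~\ref{z2}, namely $\dint_{0}^{1}\left\vert m(t)\right\vert\,dt=\dint_{0}^{1/2}t^{2}\,dt+\dint_{1/2}^{1}\left(1-t\right)^{2}\,dt=\dfrac{1}{12}$.

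Putting these pieces together, each of the two integral terms is at most $\left(\tfrac{1}{12}\right)^{1-1/q}\bigl(\tfrac{1}{12}\sup\{\cdots\}\bigr)^{1/q}=\tfrac{1}{12}\bigl(\sup\{\cdots\}\bigr)^{1/q}$, whence the right-hand side is bounded by $\dfrac{\left(b-a\right)^{2}}{4}\cdot 2\cdot\dfrac{1}{12}\bigl(\sup\{\left\vert f^{\prime\prime}(a)\right\vert^{q},\left\vert f^{\prime\prime}(b)\right\vert^{q}\}\bigr)^{1/q}=\dfrac{\left(b-a\right)^{2}}{24}\bigl(\sup\{\left\vert f^{\prime\prime}(a)\right\vert^{q},\left\vert f^{\prime\prime}(b)\right\vert^{q}\}\bigr)^{1/q}$, which is exactly $(\ref{d11})$. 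I do not expect a genuine obstacle: the argument is routine once Lemma~\ref{lm} is in hand, and the only points needing slight care are that the power-mean step should degenerate correctly when $q=1$ (in which case the estimate reduces to Theorem~\ref{thm}) and the bookkeeping of constants, so that the two factors of $\tfrac{1}{12}$ coming from the weight $\left\vert m(t)\right\vert$ collapse to a single $\tfrac{1}{12}$.
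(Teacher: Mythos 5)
Your proposal is correct and is essentially identical to the paper's own proof: both start from Lemma \ref{lm}, apply the weighted power mean inequality with weight $\left\vert m(t)\right\vert$, bound the integrands by $\sup\left\{ \left\vert f^{\prime\prime}(a)\right\vert^{q},\left\vert f^{\prime\prime}(b)\right\vert^{q}\right\}$ via quasi-convexity, and use $\dint_{0}^{1}\left\vert m(t)\right\vert dt=\tfrac{1}{12}$ so that the factors $\left(\tfrac{1}{12}\right)^{1/p}\left(\tfrac{1}{12}\right)^{1/q}$ collapse to $\tfrac{1}{12}$, yielding the constant $\tfrac{(b-a)^{2}}{24}$.
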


\begin{proof}
From Lemma $\ref{lm}$, using the well known power mean inequality , we have, 
\begin{equation*}
\begin{array}{l}
\left\vert \dfrac{1}{b-a}\dint_{a}^{b}f(x)dx-f(\dfrac{a+b}{2})\right\vert \\ 
\\ 
\text{ \ \ \ \ \ \ \ \ \ }\leq \dfrac{\left( b-a\right) ^{2}}{4}\left(
\dint_{0}^{1}\left\vert m\left( t\right) \right\vert dt\right) ^{1/p}\left\{
\left( \dint_{0}^{1}\left\vert m\left( t\right) \right\vert \left\vert
f^{\prime \prime }(ta+(1-t)b)\right\vert ^{q}dt\right) ^{1/q}\right. \\ 
\\ 
\text{ \ \ \ \ \ \ \ \ \ }\left. +\left( \dint_{0}^{1}\left\vert m\left(
t\right) \right\vert \left\vert f^{\prime \prime }(tb+(1-t)a)\right\vert
^{q}dt\right) ^{1/q}\right\} . \\ 
\\ 
\text{ \ \ \ \ \ \ \ \ \ }\leq \dfrac{\left( b-a\right) ^{2}}{4}\dfrac{1}{%
\left( 12\right) ^{1/p}}\left\{ \left( \dint_{0}^{1/2}t^{2}\sup \left\{
\left\vert f^{\prime \prime }(a)\right\vert ^{q},\left\vert f^{\prime \prime
}(b)\right\vert ^{q}\right\} dt\right. \right. \\ 
\\ 
\text{ \ \ \ \ \ \ \ \ \ }\left. +\dint_{1/2}^{1}\left( 1-t\right) ^{2}\sup
\left\{ \left\vert f^{\prime \prime }(a)\right\vert ^{q},\left\vert
f^{\prime \prime }(b)\right\vert ^{q}\right\} dt\right) ^{1/q} \\ 
\\ 
\text{ \ \ \ \ \ \ \ \ \ }\left. +\left( \dint_{0}^{1/2}t^{2}\sup \left\{
\left\vert f^{\prime \prime }(b)\right\vert ^{q},\left\vert f^{\prime \prime
}(a)\right\vert ^{q}\right\} dt+\dint_{1/2}^{1}\left( 1-t\right) \sup
\left\{ \left\vert f^{\prime \prime }(b)\right\vert ^{q},\left\vert
f^{\prime \prime }(a)\right\vert ^{q}\right\} dt\right) ^{1/q}\right\} \\ 
\\ 
\text{ \ \ \ \ \ \ \ \ \ }=\dfrac{\left( b-a\right) ^{2}}{4}\dfrac{1}{\left(
12\right) ^{1/p}}\dfrac{2}{\left( 12\right) ^{1/q}}\left( \sup \left\{
\left\vert f^{\prime \prime }(a)\right\vert ^{q},\left\vert f^{\prime \prime
}(b)\right\vert ^{q}\right\} \right) ^{1/q}.%
\end{array}%
\end{equation*}
\end{proof}

\begin{remark}
For $q=1,$ this theorem reduces Theorem \ref{thm}. For $q=p/(p-1),\ p>1,\ $%
we have an improvement of the constants in Theorem \ref{thm0}, since $%
3^{p}>(2p+1)$ if $p>1$ and accordingly%
\begin{equation*}
\frac{1}{24}<\frac{1}{8(2p+1)^{\frac{1}{p}}}.
\end{equation*}
\end{remark}

\section{Applications to Some Special Means}

We now consider the applications of our Theorems to the following special
means:

(a) The arithmetic mean: $A=A(a,b):=\dfrac{a+b}{2},$ \ $a,b\geq 0,$

(b) The geometric mean: $G=G(a,b):=\sqrt{ab},$ \ $a,b\geq 0,$

(c) The harmonic mean: 
\begin{equation*}
H=H\left( a,b\right) :=\dfrac{2ab}{a+b},\ a,b\geq 0,
\end{equation*}

(d) The logarithmic mean: 
\begin{equation*}
L=L\left( a,b\right) :=\left\{ 
\begin{array}{ccc}
a & if & a=b \\ 
&  &  \\ 
\frac{b-a}{\ln b-\ln a} & if & a\neq b%
\end{array}%
\right. \text{, \ \ \ }a,b>0,
\end{equation*}

(e) The Identric mean:%
\begin{equation*}
I=I\left( a,b\right) :=\left\{ 
\begin{array}{ccc}
a & \text{if} & a=b \\ 
&  &  \\ 
\frac{1}{e}\left( \frac{b^{b}}{a^{a}}\right) ^{\frac{1}{b-a}}\text{ } & 
\text{if} & a\neq b%
\end{array}%
\right. \text{, \ \ \ }a,b>0,
\end{equation*}

(f) The $p-$logarithmic mean

\begin{equation*}
L_{p}=L_{p}(a,b):=\left\{ 
\begin{array}{ccc}
\left[ \frac{b^{p+1}-a^{p+1}}{\left( p+1\right) \left( b-a\right) }\right] ^{%
\frac{1}{p}} & \text{if} & a\neq b \\ 
&  &  \\ 
a & \text{if} & a=b%
\end{array}%
\right. \text{, \ \ \ }p\in \mathbb{R\diagdown }\left\{ -1,0\right\} ;\;a,b>0%
\text{.}
\end{equation*}

It is well known \ that $L_{p}$ is monotonic nondecreasing \ over $p\in 
\mathbb{R}$ with $L_{-1}:=L$ and $L_{0}:=I.$ In particular, we have the
following inequalities%
\begin{equation*}
H\leq G\leq L\leq I\leq A.
\end{equation*}

The following proposition holds:

\begin{proposition}
\label{p.0} Let $a,b\in \mathbb{R}$, $0<a<b$, $n\in \mathbb{Z}$ and $%
\left\vert n(n-1)\right\vert \geq 3$. Then, we have%
\begin{equation*}
\left\vert L_{n}^{n}\left( a,b\right) -A^{n}\left( a,b\right) \right\vert
\leq \left\vert n(n-1)\right\vert \frac{\left( b-a\right) ^{2}}{48}A\left(
a^{(n-2)},b^{(n-2)}\right) .
\end{equation*}
\end{proposition}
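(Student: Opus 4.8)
The plan is to apply Theorem~\ref{z1} to the power function $f:[a,b]\to\mathbb{R}$, $f(x)=x^{n}$. First I would verify the hypotheses of that theorem. On $(0,\infty)\supseteq[a,b]$ the function $f$ is twice differentiable with $f''(x)=n(n-1)x^{n-2}$, hence $\bigl|f''(x)\bigr|=|n(n-1)|\,x^{n-2}$ since $0<a<b$. One needs $|f''|$ to be convex on $[a,b]$; as $(x^{n-2})''=(n-2)(n-3)x^{n-4}$ and $(n-2)(n-3)\ge 0$ for every integer $n$, the map $x\mapsto x^{n-2}$ is convex on $(0,\infty)$, so $|f''|$ is convex on $[a,b]$. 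The hypothesis $|n(n-1)|\ge 3$ in particular excludes $n\in\{-1,0\}$, so $L_n$ is well defined. Thus Theorem~\ref{z1} applies to $f$.

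Next I would rewrite each term of $(\ref{d3})$ in terms of the stated means. Evaluating the antiderivative gives $\frac{1}{b-a}\int_a^b x^n\,dx=\frac{b^{n+1}-a^{n+1}}{(n+1)(b-a)}=L_n^n(a,b)$ and $f\!\left(\frac{a+b}{2}\right)=\left(\frac{a+b}{2}\right)^n=A^n(a,b)$, so the left-hand side of $(\ref{d3})$ is exactly $\bigl|L_n^n(a,b)-A^n(a,b)\bigr|$. On the right-hand side, $|f''(a)|+|f''(b)|=|n(n-1)|\bigl(a^{n-2}+b^{n-2}\bigr)$, and since $A(a^{n-2},b^{n-2})=\tfrac12(a^{n-2}+b^{n-2})$ this equals $2|n(n-1)|\,A(a^{n-2},b^{n-2})$. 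Substituting these identities into the inequality of Theorem~\ref{z1} and simplifying then produces the asserted bound.

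The argument is essentially a direct specialization, so I do not expect a serious obstacle; the one point meriting care is the hypothesis check above — confirming that $|f''|$ really is convex on $[a,b]$ for the admissible integer exponents, and observing that the assumption $|n(n-1)|\ge 3$ conveniently discards the degenerate small values of $n$ (where $L_n$ is undefined, or the estimate is trivial). As an aside, since $|f''|$ is monotone on $(0,\infty)$ it is there in particular quasi-convex, so Theorem~\ref{thm} could be invoked in exactly the same fashion to obtain the companion inequality with $\sup\{|f''(a)|,|f''(b)|\}$ in place of the arithmetic mean on the right-hand side.
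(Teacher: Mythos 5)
Your route is exactly the paper's: Proposition \ref{p.0} is intended as a direct specialization of Theorem \ref{z1} to $f(x)=x^{n}$, and your hypothesis check is correct ($|f''(x)|=|n(n-1)|\,x^{n-2}$, and since $(n-2)(n-3)\geq 0$ for every integer $n$ the map $x\mapsto x^{n-2}$ is convex on $(0,\infty)$, so $|f''|$ is convex on $[a,b]$; the identifications $\frac{1}{b-a}\int_{a}^{b}x^{n}dx=L_{n}^{n}(a,b)$ and $f\bigl(\frac{a+b}{2}\bigr)=A^{n}(a,b)$ are also right). The problem is your final sentence: by your own computation $\frac{|f''(a)|+|f''(b)|}{2}=|n(n-1)|\,A\bigl(a^{n-2},b^{n-2}\bigr)$, so substituting into (\ref{d3}) yields the bound $|n(n-1)|\frac{(b-a)^{2}}{24}A\bigl(a^{n-2},b^{n-2}\bigr)$, which is \emph{twice} the bound asserted in the proposition. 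The stated estimate with the constant $\frac{1}{48}$ in front of $A\bigl(a^{n-2},b^{n-2}\bigr)$ does not follow from Theorem \ref{z1}, and it is in fact false as printed: for $n=3$ one computes $\frac{1}{b-a}\int_{a}^{b}x^{3}dx-\bigl(\frac{a+b}{2}\bigr)^{3}=\frac{(a+b)(b-a)^{2}}{8}$, which equals $|n(n-1)|\frac{(b-a)^{2}}{24}A(a,b)$ exactly (so the $\frac{1}{24}$ bound is attained) and strictly exceeds $|n(n-1)|\frac{(b-a)^{2}}{48}A(a,b)$.

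The discrepancy is a factor of $2$ caused by writing the arithmetic mean $A$ (which already contains the division by $2$) while simultaneously keeping the denominator $48=2\cdot 24$; the correct conclusion of your argument is $|n(n-1)|\frac{(b-a)^{2}}{48}\bigl(a^{n-2}+b^{n-2}\bigr)$, equivalently $|n(n-1)|\frac{(b-a)^{2}}{24}A\bigl(a^{n-2},b^{n-2}\bigr)$. So your specialization is the right (and the paper's own) method, but it cannot ``simplify to the asserted bound''; you should either flag the statement as a misprint or prove the corrected version. Your aside about obtaining a companion estimate from Theorem \ref{thm} with $\sup\{|f''(a)|,|f''(b)|\}$ and constant $\frac{(b-a)^{2}}{24}$ is fine.
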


\begin{proof}
The proof is immediate from Theorem \ref{z1} applied for $f(x)=x^{n}$, $x\in 
\mathbb{R}$, $n\in \mathbb{Z}$ and $\left\vert n(n-1)\right\vert \geq 3$.
\end{proof}

\begin{proposition}
Let $a,b\in (0,\infty )$ and $a<b$. Then, for all $q>1,$we have%
\begin{equation*}
\ln \left( \frac{I\left( a,b\right) }{A\left( a,b\right) }\right) \leq 
\dfrac{\left( b-a\right) ^{2}}{8a^{2}b^{2}\left( 2p+1\right) ^{\frac{1}{p}}}%
\left[ A\left( a^{2q},b^{2q}\right) \right] ^{\frac{1}{q}}.
\end{equation*}

\begin{proof}
The assertion follows from Theorem \ref{z3} applied to the mapping $%
f:(0,\infty )\rightarrow (-\infty ,0),\ f(x)=-\ln x$ and the details are
omitted.
\end{proof}
\end{proposition}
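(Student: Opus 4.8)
The plan is to apply Theorem \ref{z3} to the function $f:(0,\infty)\to\mathbb{R}$ given by $f(x)=-\ln x$, as the statement indicates. First I would record the elementary facts $f'(x)=-1/x$ and $f''(x)=1/x^{2}$, so that $\left\vert f^{\prime\prime}(x)\right\vert^{q}=x^{-2q}$. Since $x\mapsto x^{-2q}$ has second derivative $2q(2q+1)x^{-2q-2}>0$ on $(0,\infty)$, the map $\left\vert f^{\prime\prime}\right\vert^{q}$ is convex on every $[a,b]\subset(0,\infty)$ for each $q>1$; hence $f$ meets the hypotheses of Theorem \ref{z3} with $\tfrac1p+\tfrac1q=1$, and the inequality of that theorem is available for this $f$.

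Next I would identify both sides of the resulting estimate. For the left-hand side, integration by parts gives $\int_{a}^{b}\ln x\,dx=b\ln b-a\ln a-(b-a)$, whence
\[
\frac{1}{b-a}\int_{a}^{b}f(x)\,dx=-\Bigl(\frac{b\ln b-a\ln a}{b-a}-1\Bigr)=-\ln I(a,b)
\]
by the definition of the identric mean, while $f\bigl(\tfrac{a+b}{2}\bigr)=-\ln A(a,b)$; therefore
\[
\frac{1}{b-a}\int_{a}^{b}f(x)\,dx-f\Bigl(\frac{a+b}{2}\Bigr)=\ln A(a,b)-\ln I(a,b)=\ln\frac{A(a,b)}{I(a,b)}.
\]
For the right-hand side, $\left\vert f^{\prime\prime}(a)\right\vert^{q}+\left\vert f^{\prime\prime}(b)\right\vert^{q}=a^{-2q}+b^{-2q}$, and a one-line manipulation rewrites $\tfrac12\bigl(a^{-2q}+b^{-2q}\bigr)$ as $\dfrac{A(a^{2q},b^{2q})}{(ab)^{2q}}$; taking $q$-th roots and multiplying by $\dfrac{(b-a)^{2}}{8(2p+1)^{1/p}}$ produces exactly the bound displayed in the proposition.

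Finally, to reach the stated form I would invoke the chain $G\le I\le A$ recalled above, in particular $I(a,b)\le A(a,b)$, so that $\ln\dfrac{I(a,b)}{A(a,b)}\le0\le\ln\dfrac{A(a,b)}{I(a,b)}$. Combining this with the identification above and Theorem \ref{z3} gives
\[
\ln\frac{I(a,b)}{A(a,b)}\le\left\vert\frac{1}{b-a}\int_{a}^{b}f(x)\,dx-f\Bigl(\frac{a+b}{2}\Bigr)\right\vert\le\frac{(b-a)^{2}}{8a^{2}b^{2}(2p+1)^{1/p}}\bigl[A(a^{2q},b^{2q})\bigr]^{1/q},
\]
which is the claim. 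I expect no genuine obstacle: the argument is a direct substitution into Theorem \ref{z3}, and the only points needing a moment's care are the convexity check for $\left\vert f^{\prime\prime}\right\vert^{q}$ and the bookkeeping of signs when passing from $\ln(A/I)$ to $\ln(I/A)$ through the absolute value.
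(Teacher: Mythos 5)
Your proposal is correct and follows exactly the route the paper intends: apply Theorem \ref{z3} to $f(x)=-\ln x$, identify $\frac{1}{b-a}\int_a^b f = -\ln I(a,b)$ and $f\bigl(\frac{a+b}{2}\bigr)=-\ln A(a,b)$, and rewrite $\tfrac12\bigl(a^{-2q}+b^{-2q}\bigr)=A(a^{2q},b^{2q})/(ab)^{2q}$; the paper simply omits these details. You also correctly note the sign subtlety the paper glosses over: Theorem \ref{z3} actually bounds $\ln\bigl(A(a,b)/I(a,b)\bigr)\ge 0$, and the inequality as literally stated (with $\ln(I/A)\le 0$) then follows a fortiori.
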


\begin{proposition}
\label{p.1} Let $a,b\in \mathbb{R}$, $0<a<b$ and $n\in 
%TCIMACRO{\U{2124} }%
%BeginExpansion
\mathbb{Z}
%EndExpansion
$, $\left\vert n(n-1)\right\vert >2$. Then, for all $q>1,$we have%
\begin{equation*}
\left\vert L_{n}^{n}\left( a,b\right) -A^{n}\left( a,b\right) \right\vert
\leq \left\vert n(n-1)\right\vert \frac{\left( b-a\right) ^{2}}{24}\left[
A\left( a^{q(n-2)},b^{q(n-2)}\right) \right] ^{\frac{1}{q}}.
\end{equation*}

\begin{proof}
The assertion follows from Theorem \ref{z2} applied for $f(x)=x^{n}$, $x\in 
\mathbb{R}$, $n\in \mathbb{%
%TCIMACRO{\U{2124} }%
%BeginExpansion
\mathbb{Z}
%EndExpansion
}$ and $\left\vert n(n-1)\right\vert \geq 3$.
\end{proof}
\end{proposition}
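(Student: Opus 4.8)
The plan is to specialize Theorem~\ref{z2} to the mapping $f(x)=x^{n}$ on the interval $[a,b]\subset(0,\infty)$, in complete analogy with the way Proposition~\ref{p.0} specializes Theorem~\ref{z1}. First I would record the elementary facts: $f$ is smooth on $(0,\infty)$, so in particular $f^{\prime \prime}\in L_{1}[a,b]$, and $f^{\prime \prime}(x)=n(n-1)x^{n-2}$, whence $\left\vert f^{\prime \prime}(x)\right\vert ^{q}=\left\vert n(n-1)\right\vert ^{q}x^{q(n-2)}$ for $x>0$. Note also that the hypothesis $\left\vert n(n-1)\right\vert >2$ with $n\in\mathbb{Z}$ forces $n\notin\{-1,0,1,2\}$, so $n+1\neq 0$ and $L_{n}$ is well defined (and, for integers, $\left\vert n(n-1)\right\vert >2$ is the same as $\left\vert n(n-1)\right\vert \geq 3$).

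The only point requiring an argument is the hypothesis of Theorem~\ref{z2}, namely that $\left\vert f^{\prime \prime}\right\vert ^{q}$ is convex on $[a,b]$. Since $x\mapsto x^{\alpha}$ is convex on $(0,\infty)$ precisely when $\alpha\leq 0$ or $\alpha\geq 1$, I would split into the two cases allowed by $\left\vert n(n-1)\right\vert >2$: if $n\geq 3$ then $n-2\geq 1$, so $q(n-2)\geq q>1$; if $n\leq -2$ then $n-2\leq -4$, so $q(n-2)<0$. In either case $\left\vert f^{\prime \prime}\right\vert ^{q}$ is a positive constant times a convex power of $x$, hence convex on $[a,b]$. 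This is the step where the standing hypothesis on $n$ is actually consumed.

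With the hypotheses verified, it remains only to evaluate the three quantities occurring in $(\ref{d7})$: $\frac{1}{b-a}\int_{a}^{b}x^{n}\,dx=\frac{b^{n+1}-a^{n+1}}{(n+1)(b-a)}=L_{n}^{n}(a,b)$; $f\!\left(\frac{a+b}{2}\right)=\left(\frac{a+b}{2}\right)^{n}=A^{n}(a,b)$; and
\[
\left[\frac{\left\vert f^{\prime \prime}(a)\right\vert ^{q}+\left\vert f^{\prime \prime}(b)\right\vert ^{q}}{2}\right]^{1/q}
=\left\vert n(n-1)\right\vert \left[\frac{a^{q(n-2)}+b^{q(n-2)}}{2}\right]^{1/q}
=\left\vert n(n-1)\right\vert \left[A\!\left(a^{q(n-2)},b^{q(n-2)}\right)\right]^{1/q}.
\]
Substituting these into $(\ref{d7})$ yields the asserted estimate. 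I do not expect any genuine obstacle: the argument is a direct substitution into Theorem~\ref{z2}, and the only care needed is the short case analysis establishing convexity of $\left\vert f^{\prime \prime}\right\vert ^{q}$.
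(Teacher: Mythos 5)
Your proposal is correct and follows the same route as the paper, which simply invokes Theorem \ref{z2} with $f(x)=x^{n}$; you merely spell out the details (the case analysis $n\geq 3$ versus $n\leq -2$ establishing convexity of $\left\vert f''\right\vert ^{q}$ on $[a,b]\subset(0,\infty)$, and the evaluation of the means) that the paper leaves implicit.
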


\begin{proposition}
\label{p.2} Let $a,b\in \mathbb{R}$, $0<a<b$. Then, for all $q>1$, we have 
\begin{equation*}
\left\vert L^{-1}\left( a,b\right) -A^{-1}\left( a,b\right) \right\vert \leq 
\frac{\left( b-a\right) ^{2}}{24}\frac{2^{\frac{q-1}{q}}}{a^{3}b^{3}}\left[
a^{3q}+b^{3q}\right] ^{\frac{1}{q}}.
\end{equation*}
\end{proposition}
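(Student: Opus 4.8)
The plan is to obtain this as a direct application of Theorem \ref{z2} with the choice $f(x)=1/x$ on the interval $[a,b]\subset(0,\infty)$. First I would identify the two quantities on the left-hand side of $(\ref{d7})$ for this $f$. Since $\dint_{a}^{b}x^{-1}dx=\ln b-\ln a$, we get $\dfrac{1}{b-a}\dint_{a}^{b}f(x)dx=\dfrac{\ln b-\ln a}{b-a}=L^{-1}(a,b)$, while $f\!\left(\dfrac{a+b}{2}\right)=\dfrac{2}{a+b}=A^{-1}(a,b)$. Hence the left-hand side of $(\ref{d7})$ is exactly $\left\vert L^{-1}(a,b)-A^{-1}(a,b)\right\vert$, which is the quantity we must bound.

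Next I would verify the hypothesis of Theorem \ref{z2}. We have $f^{\prime\prime}(x)=2x^{-3}$, so $\left\vert f^{\prime\prime}(x)\right\vert^{q}=2^{q}x^{-3q}$. On $(0,\infty)$ the map $x\mapsto x^{-3q}$ has second derivative $3q(3q+1)x^{-3q-2}>0$, so $\left\vert f^{\prime\prime}\right\vert^{q}$ is convex there for every $q\geq1$ (indeed for every $q>0$), and Theorem \ref{z2} is applicable.

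Finally I would substitute $\left\vert f^{\prime\prime}(a)\right\vert^{q}=2^{q}a^{-3q}$ and $\left\vert f^{\prime\prime}(b)\right\vert^{q}=2^{q}b^{-3q}$ into the right-hand side of $(\ref{d7})$, using
\[
\frac{\left\vert f^{\prime\prime}(a)\right\vert^{q}+\left\vert f^{\prime\prime}(b)\right\vert^{q}}{2}=2^{q-1}\,\frac{a^{3q}+b^{3q}}{a^{3q}b^{3q}},
\]
so that its $q$-th root equals $2^{\frac{q-1}{q}}\,\dfrac{\left(a^{3q}+b^{3q}\right)^{1/q}}{a^{3}b^{3}}$; multiplying by $\dfrac{(b-a)^{2}}{24}$ gives precisely the claimed bound. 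The only genuine content is this elementary simplification; there is no real obstacle, as the convexity check is immediate and the substitution is routine.
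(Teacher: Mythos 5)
Your proof is correct and follows exactly the route the paper intends: the paper's own proof simply states that the assertion follows from Theorem \ref{z2} applied to $f(x)=1/x$ on $[a,b]$ with details omitted, and your computation of $L^{-1}$, $A^{-1}$, the convexity of $\left\vert f^{\prime\prime}\right\vert^{q}=2^{q}x^{-3q}$, and the simplification yielding the factor $2^{\frac{q-1}{q}}(a^{3q}+b^{3q})^{1/q}/(a^{3}b^{3})$ supplies precisely those omitted details correctly.
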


\begin{proof}
The assertion follows from Theorem \ref{z2} applied to $f(x)=\frac{1}{x},\
x\in \lbrack a,b]$ and the details are omitted.
\end{proof}

\begin{proposition}
\label{p.3} Let $a,b\in \mathbb{R}$, $a<b$ and $0\notin \left[ a,b\right] ,$
then, for all $q\geq 1,\ $the following inequality holds:%
\begin{equation*}
\left\vert L^{-1}\left( a,b\right) -A^{-1}\left( a,b\right) \right\vert \leq 
\dfrac{\left( b-a\right) ^{2}}{24}\left( \sup \left\{ \left\vert \frac{2}{%
a^{3}}\right\vert ^{q},\left\vert \frac{2}{b^{3}}\right\vert ^{q}\right\}
\right) ^{\frac{1}{q}}.
\end{equation*}
\end{proposition}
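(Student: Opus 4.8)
The plan is to apply Theorem~\ref{thm} directly to the function $f(x)=\frac{1}{x}$ on the interval $[a,b]$, since this is the quasi-convex analogue of Proposition~\ref{p.2} (which used the power-mean Theorem~\ref{z2}) and fits the hypothesis pattern of the preceding propositions. First I would record the relevant derivatives: for $f(x)=1/x$ we have $f''(x)=2/x^{3}$, which is continuous and hence in $L_{1}[a,b]$ whenever $0\notin[a,b]$. Next I would identify the two sides of the inequality in Theorem~\ref{thm} for this $f$. For the left-hand side, compute
\begin{equation*}
\frac{1}{b-a}\int_{a}^{b}\frac{dx}{x}=\frac{\ln b-\ln a}{b-a}=\frac{1}{L(a,b)},
\qquad
f\!\left(\frac{a+b}{2}\right)=\frac{2}{a+b}=\frac{1}{A(a,b)},
\end{equation*}
so the left-hand side of \eqref{d8} becomes exactly $\left\vert L^{-1}(a,b)-A^{-1}(a,b)\right\vert$.

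For the right-hand side, the key point is that $\left\vert f''\right\vert^{q}=\left\vert 2/x^{3}\right\vert^{q}$ must be quasi-convex on $[a,b]$ for the chosen $q\ge 1$. Since $x\mapsto 2/x^{3}$ is monotone on any interval not containing $0$ (increasing on the negative axis, decreasing on the positive axis), its absolute value raised to any positive power is also monotone, hence quasi-convex; a monotone function attains its supremum over a subinterval at an endpoint, which is precisely what quasi-convexity requires. This is the only hypothesis of Theorem~\ref{thm} that needs checking, and it is where I would be most careful, because the problem allows $[a,b]$ to lie on either side of the origin (unlike Proposition~\ref{p.2}, which assumed $0<a<b$). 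Handling the sign of $a,b$ via the absolute values $\left\vert 2/a^{3}\right\vert$ and $\left\vert 2/b^{3}\right\vert$ in the final bound is what makes the statement valid in both cases.

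With these two computations in hand, Theorem~\ref{thm} gives
\begin{equation*}
\left\vert L^{-1}(a,b)-A^{-1}(a,b)\right\vert
\le \frac{(b-a)^{2}}{24}\sup\left\{\left\vert f''(a)\right\vert^{q},\left\vert f''(b)\right\vert^{q}\right\}^{1/q}
= \frac{(b-a)^{2}}{24}\left(\sup\left\{\left\vert \tfrac{2}{a^{3}}\right\vert^{q},\left\vert \tfrac{2}{b^{3}}\right\vert^{q}\right\}\right)^{1/q},
\end{equation*}
which is exactly the claimed inequality. I do not anticipate any genuine obstacle here: the proof is a routine substitution into an already-proven theorem, and the author will likely simply write ``The assertion follows from Theorem~\ref{thm} applied to $f(x)=1/x$ and the details are omitted.'' The one genuinely substantive remark worth making explicit is that $\left\vert f''\right\vert^{q}$ is quasi-convex on $[a,b]$ precisely because it is monotone there, which is exactly the reason the hypothesis $0\notin[a,b]$ appears.
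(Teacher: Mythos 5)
Your proposal is correct and in substance follows the paper's route: the paper's proof is exactly ``apply the quasi-convex midpoint theorem to $f(x)=1/x$,'' only it invokes Theorem \ref{thm1} (the power-mean version whose hypothesis is quasi-convexity of $\left\vert f''\right\vert^{q}$ and whose bound carries the exponent $q$), not Theorem \ref{thm}. The one slip in your write-up is that you cite Theorem \ref{thm}, whose conclusion contains no $q$, yet you write down the $q$-dependent bound of Theorem \ref{thm1}; this is harmless because $\left( \sup \left\{ u^{q},v^{q}\right\} \right) ^{1/q}=\sup \left\{ u,v\right\} $ for $u,v\geq 0$, so the two bounds coincide, but you should either cite Theorem \ref{thm1} directly (your monotonicity argument, which correctly explains the role of $0\notin \left[ a,b\right] $, verifies its hypothesis) or state that identity explicitly when passing from the conclusion of Theorem \ref{thm} to the claimed inequality.
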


\begin{proof}
The proof is obvious from Theorem \ref{thm1} applied to the quasi-convex
mapping $f(x)=\frac{1}{x},$ $x\in \left[ a,b\right] $.
\end{proof}

\begin{proposition}
\label{p.4} Let $a,b\in \mathbb{R}$, $0<a<b$ and $n\in 
%TCIMACRO{\U{2124} }%
%BeginExpansion
\mathbb{Z}
%EndExpansion
$, $\left\vert n(n-1)\right\vert \geq 3,$ then, for all $q\geq 1,$the
following inequality holds:%
\begin{equation*}
\left\vert L_{n}^{n}\left( a,b\right) -A^{n}\left( a,b\right) \right\vert
\leq \left\vert n(n-1)\right\vert \dfrac{\left( b-a\right) ^{2}}{8\left(
2p+1\right) ^{\frac{1}{p}}}\left( \sup \left\{ a^{q(n-2)},b^{q(n-2)}\right\}
\right) ^{\frac{1}{q}}
\end{equation*}
\end{proposition}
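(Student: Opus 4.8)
The plan is to apply Theorem \ref{thm0} to the function $f(x)=x^{n}$ on the interval $[a,b]$, exactly as Proposition \ref{p.1} applies Theorem \ref{z2} to the same function. First I would record that $f''(x)=n(n-1)x^{n-2}$, so that $|f''(x)|^{q}=|n(n-1)|^{q}|x|^{q(n-2)}$, and since $0<a<b$ this is a monotone function of $x$ on $[a,b]$ (increasing if $n-2\ge 0$, decreasing if $n-2<0$); a monotone function is quasi-convex, so the hypothesis of Theorem \ref{thm0} is satisfied whenever $|n(n-1)|\ge 3$ guarantees $f''\in L_{1}[a,b]$ (here it is automatic, being continuous). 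Then $\sup\{|f''(a)|^{q},|f''(b)|^{q}\} = |n(n-1)|^{q}\sup\{a^{q(n-2)},b^{q(n-2)}\}$.

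Next I would identify the left-hand side of the inequality in Theorem \ref{thm0} for this choice of $f$. We have
\begin{equation*}
\frac{1}{b-a}\int_{a}^{b}x^{n}\,dx = \frac{b^{n+1}-a^{n+1}}{(n+1)(b-a)} = L_{n}^{n}(a,b),
\end{equation*}
using the definition of the $p$-logarithmic mean with $p=n$ (valid for $n\in\mathbb{Z}\setminus\{-1,0\}$, which is ensured by $|n(n-1)|\ge 3$), and $f\!\left(\tfrac{a+b}{2}\right) = \left(\tfrac{a+b}{2}\right)^{n} = A^{n}(a,b)$. Substituting these into the conclusion of Theorem \ref{thm0} gives
\begin{equation*}
\left| L_{n}^{n}(a,b) - A^{n}(a,b)\right| \leq \frac{(b-a)^{2}}{8(2p+1)^{1/p}}\,|n(n-1)|\left(\sup\{a^{q(n-2)},b^{q(n-2)}\}\right)^{1/q},
\end{equation*}
which is exactly the claimed inequality after pulling the constant $|n(n-1)|$ out front.

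There is no serious obstacle here; the only points requiring a word of care are the verification that $x\mapsto |x|^{q(n-2)}$ is quasi-convex on $[a,b]$ (immediate from monotonicity, since $0\notin[a,b]$) and the bookkeeping that identifies the average integral with $L_{n}^{n}(a,b)$ and the midpoint value with $A^{n}(a,b)$. Both are routine, so the proof reduces to a one-line invocation of Theorem \ref{thm0}, and I would simply write: \emph{The assertion follows from Theorem \ref{thm0} applied to the quasi-convex mapping $f(x)=x^{n}$, $x\in[a,b]$, and the details are omitted.}
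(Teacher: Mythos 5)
Your proposal is correct and matches the paper's own argument, which likewise proves the proposition by a one-line application of Theorem \ref{thm0} to the quasi-convex mapping $f(x)=x^{n}$ on $[a,b]$. The details you supply (monotonicity of $|f''|^{q}$ giving quasi-convexity, and the identification of the mean-value integral with $L_{n}^{n}(a,b)$ and the midpoint value with $A^{n}(a,b)$) are exactly the ones the paper leaves implicit.
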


\begin{proof}
The proof is obvious from Theorem \ref{thm0} applied to the quasi-convex
mapping $f(x)=x^{n},$ $x\in \left[ a,b\right] ,\ n\in \mathbb{Z}$ and $%
\left\vert n(n-1)\right\vert \geq 3$.
\end{proof}


\begin{thebibliography}{9}
\bibitem{Alomari} M. Alomari, M. Darus and U.S. Kirmaci, Refinements of
Hadamard's type inequalities for quasi-convex functions with applications to
trapezoidal formula and to special means, \textit{Computers and Math. with
Appl.}, 59, pp:225-232, 2010.

\bibitem{SSDRPA} S.S. Dragomir and R.P. Agarwal, Two inequalities for
differentiable mappings and applications to special means of real numbers
and trapezoidal formula, \textit{Appl. Math. Lett.}, 11(5) (1998), 91--95.

\bibitem{Hussain} S. Hussain, M.I. Bhatti and M. Iqbal, Hadamard-type
inequalities for s-convex functions I, \textit{Punjab Univ. Jour. of Math.},
Vol.41, pp:51-60, (2009).

\bibitem{Ion} D.A. Ion, Some estimates on the Hermite-Hadamard inequality
through quasi-convex functions, \textit{Annals of University of Craiova
Math. Comp. Sci. Ser.}, 34 (2007), 82-87.

\bibitem{jensen} J. L. W. V. Jensen, On konvexe funktioner og uligheder
mellem middlvaerdier, \textit{Nyt. Tidsskr. Math. B}., 16, 49-69, 1905.

\bibitem{CEMPJP} C.E.M. Pearce and J. Pe\v{c}ari\'{c}, Inequalities for
differentiable mappings with application to special means and quadrature
formulae, Appl. Math. Lett., 13(2) (2000), 51--55.

\bibitem{USK} U.S. K\i rmac\i , Inequalities for differentiable mappings and
applications to special means of real numbers and to midpoint formula, 
\textit{Appl. Math. Comp.,} 147 (2004), 137-146.

\bibitem{USKMEO} U.S. K\i rmac\i\ and M.E. \"{O}zdemir, On some inequalities
for differentiable mappings and applications to special means of real
numbers and to midpoint formula, \textit{Appl. Math. Comp.,} 153 (2004),
361-368.

\bibitem{PPT} J. Pe\v{c}ari\'{c}, F. Proschan and Y.L. Tong, Convex
functions, partial ordering and statistical applications, \textit{Academic
Press}, New York, 1991.
\end{thebibliography}
\end{document}